\patchcmd{\@citex}{\if@filesw}{\getcitekey\@citeb \if@filesw}%
    {\typeout{*** SUCCESS ***}}{\typeout{*** FAIL ***}}
\patchcmd{\nocite}{\if@filesw}{\getcitekey\@citeb \if@filesw}%
    {\typeout{*** SUCCESS ***}}{\typeout{*** FAIL ***}}
\newenvironment{aenumerate}{%
	\begin{enumerate}[label=(\alph{*}), ref=(\alph{*})]
}{%
	\end{enumerate}%
}
\tikzset{commutative diagrams/arrow style=Latin Modern}
\newcommand{\derR}{\mathbf{R}}
\newcommand{\derL}{\mathbf{L}}
\newcommand{\decal}[1]{\lbrack #1 \rbrack}
\newcommand{\shH}{\mathcal{H}}
\newcommand{\tensor}{\otimes}
\newcommand{\shHom}{\mathcal{H}\hspace{-1pt}\mathit{om}}
\newcommand{\NN}{\mathbb{N}}
\newcommand{\menge}[2]{\bigl\{ \thinspace #1 \thinspace\thinspace \big\vert%
\thinspace\thinspace #2 \thinspace \bigr\}}
\DeclareMathOperator{\rk}{rk}
\DeclareMathOperator{\Spec}{Spec}
\DeclareMathOperator{\id}{id}
\DeclareMathOperator{\Supp}{Supp}
\DeclareMathOperator{\codim}{codim}
\DeclareMathOperator{\Hom}{Hom}
\DeclareMathOperator{\Pic}{Pic}
\newcommand{\define}[1]{\emph{#1}}
\newcommand{\shf}[1]{\mathscr{#1}}
\newcommand{\OX}{\shf{O}_X}
\newcommand{\restr}[1]{\big\vert_{#1}}
\newcommand{\argbl}{-}
\def\overbar#1#2#3{{%
	\setbox0=\hbox{\displaystyle{#1}}%
	\dimen0=\wd0
	\advance\dimen0 by -#2 
	\vbox {\nointerlineskip \moveright #3 \vbox{\hrule height 0.3pt width \dimen0}%
		\nointerlineskip \vskip 1.5pt \box0}%
}}
\newcommand{\into}{\hookrightarrow}
\newcommand{\fu}{f^{\ast}}
\newcommand{\fl}{f_{\ast}}
\newcommand{\iu}{i^{\ast}}
\newcommand{\pu}{p^{\ast}}
\newcommand{\pl}{p_{\ast}}
\newcommand{\tl}{t_{\ast}}
\newcommand{\shF}{\shf{F}}
\newcommand{\shO}{\shf{O}}
\let\@@seccntformat\@seccntformat
\renewcommand*{\@seccntformat}[1]{%
  \expandafter\ifx\csname @seccntformat@#1\endcsname\relax
    \expandafter\@@seccntformat
  \else
    \expandafter
      \csname @seccntformat@#1\expandafter\endcsname
  \fi
    {#1}%
}
\newcommand*{\@seccntformat@subsection}[1]{%
  \textbf{\csname the#1\endcsname.}
}
\let\@paragraph\paragraph
\renewcommand*{\paragraph}[1]{%
	\vspace{0.3\baselineskip}%
	\@paragraph{\textit{#1}}%
}
\newtheorem{theorem}[equation]{Theorem}
\newtheorem*{theorem*}{Theorem}
\newtheorem{lemma}[equation]{Lemma}
\newtheorem*{lemma*}{Lemma}
\newtheorem*{corollary*}{Corollary}
\newtheorem{proposition}[equation]{Proposition}
\newtheorem*{proposition*}{Proposition}
\newtheorem*{conjecture*}{Conjecture}
\theoremstyle{definition}
\newtheorem{definition}[equation]{Definition}
\newtheorem*{definition*}{Definition}
\theoremstyle{remark}
\newtheorem*{example*}{Example}
\newtheorem*{problem*}{Problem}
\theoremstyle{plain}
\newcommand{\theoremref}[1]{\hyperref[#1]{Theorem~\ref*{#1}}}
\newcommand{\lemmaref}[1]{\hyperref[#1]{Lemma~\ref*{#1}}}
\newcommand{\definitionref}[1]{\hyperref[#1]{Definition~\ref*{#1}}}
\newcommand{\propositionref}[1]{\hyperref[#1]{Proposition~\ref*{#1}}}
\newcommand{\conjectureref}[1]{\hyperref[#1]{Conjecture~\ref*{#1}}}
\newcommand{\corollaryref}[1]{\hyperref[#1]{Corollary~\ref*{#1}}}
\newcommand{\exampleref}[1]{\hyperref[#1]{Example~\ref*{#1}}}
\newcommand{\exerciseref}[1]{\hyperref[#1]{Exercise~\ref*{#1}}}
\let\old@caption\caption
\renewcommand*{\caption}[1]{%
	\setcounter{figure}{\value{equation}}%
	\stepcounter{equation}%
	\old@caption{#1}\relax%
}
\newcounter{intro}
\newtheorem{intro-conjecture}[intro]{Conjecture}
\newtheorem{intro-corollary}[intro]{Corollary}
\newtheorem{intro-theorem}[intro]{Theorem}
\newcommand{\OA}{\mathscr{O}_A}
\newcommand{\Ah}{\hat{A}}
\newcommand{\newpar}{\subsection{}}
\newcommand{\parref}[1]{\hyperref[#1]{\S\ref*{#1}}}
\newcommand{\chapref}[1]{\hyperref[#1]{Chapter~\ref*{#1}}}
\newcommand*\if@single[3]{%
  \setbox0\hbox{${\mathaccent"0362{#1}}^H$}%
  \setbox2\hbox{${\mathaccent"0362{\kern0pt#1}}^H$}%
  \ifdim\ht0=\ht2 #3\else #2\fi
  }
\newcommand*\rel@kern[1]{\kern#1\dimexpr\macc@kerna}
\newcommand*\widebar[1]{\@ifnextchar^{{\wide@bar{#1}{0}}}{\wide@bar{#1}{1}}}
\newcommand*\wide@bar[2]{\if@single{#1}{\wide@bar@{#1}{#2}{1}}{\wide@bar@{#1}{#2}{2}}}
\newcommand*\wide@bar@[3]{%
  \begingroup
  \def\mathaccent##1##2{%
    \if#32 \let\macc@nucleus\first@char \fi
    \setbox\z@\hbox{$\macc@style{\macc@nucleus}_{}$}%
    \setbox\tw@\hbox{$\macc@style{\macc@nucleus}{}_{}$}%
    \dimen@\wd\tw@
    \advance\dimen@-\wd\z@
    \divide\dimen@ 3
    \@tempdima\wd\tw@
    \advance\@tempdima-\scriptspace
    \divide\@tempdima 10
    \advance\dimen@-\@tempdima
    \ifdim\dimen@>\z@ \dimen@0pt\fi
    \rel@kern{0.6}\kern-\dimen@
    \if#31
      \overline{\rel@kern{-0.6}\kern\dimen@\macc@nucleus\rel@kern{0.4}\kern\dimen@}%
      \advance\dimen@0.4\dimexpr\macc@kerna
      \let\final@kern#2%
      \ifdim\dimen@<\z@ \let\final@kern1\fi
      \if\final@kern1 \kern-\dimen@\fi
    \else
      \overline{\rel@kern{-0.6}\kern\dimen@#1}%
    \fi
  }%
  \macc@depth\@ne
  \let\math@bgroup\@empty \let\math@egroup\macc@set@skewchar
  \mathsurround\z@ \frozen@everymath{\mathgroup\macc@group\relax}%
  \macc@set@skewchar\relax
  \let\mathaccentV\macc@nested@a
  \if#31
    \macc@nested@a\relax111{#1}%
  \else
    \def\gobble@till@marker##1\endmarker{}%
    \futurelet\first@char\gobble@till@marker#1\endmarker
    \ifcat\noexpand\first@char A\else
      \def\first@char{}%
    \fi
    \macc@nested@a\relax111{\first@char}%
  \fi
  \endgroup
}
\newcommand{\FM}{\operatorname{\mathsf{FM}}}
\newcommand{\Dbcoh}{D_{\mathit{coh}}^{\mathit{b}}}
\newcommand{\Dtcoh}[1]{D_{\mathit{coh}}^{#1}}
\renewcommand{\Ah}{A^{\vee}}
\newcommand{\OAh}{\shO_{\Ah}}
\newcommand{\fh}{\hat{f}}
\newcommand{\fhl}{\fh_{\ast}}
\newcommand{\fhu}{\fh^{\ast}}
\newcommand{\Bh}{B^{\vee}}
\newcommand{\Ph}{P}
\newcommand{\Lh}{\hat{L}}
\newcommand{\OB}{\shO_B}
\newcommand{\OBh}{\shO_{\Bh}}
\newcommand{\shFh}{\hat{\shF}}
\newcommand{\omXk}{\omega_{X/k}}
\newcommand{\op}{\mathit{op}}
\newcommand{\su}{s^{\ast}}
\DeclareMathOperator{\Coh}{Coh}
\newcommand{\OZ}{\shO_Z}
\DeclareMathOperator{\ev}{ev}
\begin{document}

\title{The Fourier-Mukai transform made easy}

\author[Ch.~Schnell]{Christian Schnell}
\address{Department of Mathematics, Stony Brook University, Stony Brook, NY 11794-3651}
\email{cschnell@math.sunysb.edu}

\begin{abstract}
We propose a slightly modified definition for the Fourier-Mukai transform (on abelian
varieties) that makes it much easier to remember various formulas. As an application, we
give relatively short proofs for two important theorems: the characterization of
GV-sheaves in terms of vanishing, due to Hacon; and fact that M-regularity implies
(continuous) global generation, due to Pareschi and Popa.
\end{abstract}
\date{\today}
\maketitle

\section{Introduction}

\newpar
The Fourier-Mukai transform, first introduced by Mukai \cite{Mukai}, is an
equivalence of $k$-linear triangulated categories
\[
	\derR \Phi_{P_A} \colon \Dbcoh(\OA) \to \Dbcoh(\OAh)
\]
between the bounded derived category of coherent sheaves on the abelian variety $A$,
and that on the dual abelian variety $\Ah = \Pic_{A/k}^0$. It is defined by pulling
back to $A \times \Ah$, tensoring by the Poincar\'e bundle $P_A$, and
then pushing forward to $\Ah$. 

\newpar
The purpose of this note is to propose a slightly different definition for the
Fourier-Mukai transform that makes it easier to remember various formulas. When $X$
is an equidimensional and nonsingular algebraic variety over $k$, we denote by 
\[
	\derR \Delta_X = \derR \shHom \bigl( \argbl, \omXk \decal{\dim X} \bigr)
		\colon \Dbcoh(\OX) \to \Dbcoh(\OX)^{\op}
\]
the (contravariant) Grothendieck duality functor. In the case $X = \Spec k$, we shall
use the simplified notation $\derR \Delta_k$.  

\begin{definition}
The exact functor
\[
	\FM_A = \derR \Phi_{P_A} \circ \derR \Delta_A \colon
		\Dbcoh(\OA) \to \Dbcoh(\OAh)^{\op}
\]
is called the \define{symmetric Fourier-Mukai transform}.
\end{definition}

The word ``symmetric'' will be justified by the results below. Note that $\FM_A$
is a \emph{contravariant} functor; this turns out to be quite useful in practice, for
instance in the study of GV-sheaves (see \parref{par:GV}).

\newpar
It is clear from Mukai's result \cite[Thm~2.2]{Mukai} that $\FM_A$ is also an
equivalence of categories. One
advantage of the new definition is that it respects the symmetry between
the two abelian varieties $A$ and $\Ah$. For example, one can show that
\begin{equation} \label{eq:example}
	\FM_A \bigl( k(0) \bigr) = \OAh \quad \text{and} \quad
	\FM_A(\OA) = k(0).
\end{equation}
Here $k(0) = e_{\ast} \shO_{\Spec k}$ means the structure sheaf of the closed point
$0 \in A(k)$; we use the same notation also on $\Ah$. 
The following theorem further justifies the name ``symmetric Fourier-Mukai transform''.

\begin{theorem} \label{thm:equivalence}
The composed functors $\FM_{\Ah} \circ \FM_A$ and $\FM_A \circ \FM_{\Ah}$
are naturally isomorphic to the identity. In other words,
\[
	\FM_A \colon \Dbcoh(\OA) \to \Dbcoh(\OAh)^{\op}
\]
is an equivalence of categories, with quasi-inverse $\FM_{\Ah}$.
\end{theorem}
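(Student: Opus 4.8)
The plan is to reduce the statement to Mukai's inversion theorem \cite[Thm~2.2]{Mukai}, which provides natural isomorphisms $\derR\Phi_{P_{\Ah}}\circ\derR\Phi_{P_A}\cong(-1_A)^{\ast}\decal{-g}$ and $\derR\Phi_{P_A}\circ\derR\Phi_{P_{\Ah}}\cong(-1_{\Ah})^{\ast}\decal{-g}$, where $g=\dim A=\dim\Ah$; the whole purpose of inserting the Grothendieck duality functors into the definition of $\FM$ is that they will absorb the spurious shift $\decal{-g}$ and the sign $(-1)^{\ast}$. I will establish $\FM_{\Ah}\circ\FM_A\cong\id$; the isomorphism $\FM_A\circ\FM_{\Ah}\cong\id$ then follows from the very same computation applied with $A$ and $\Ah$ interchanged (using biduality $(\Ah)^{\vee}=A$ together with Mukai's second formula). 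Throughout, $p\colon A\times\Ah\to A$ and $q\colon A\times\Ah\to\Ah$ denote the two projections.

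The one substantial ingredient is a \emph{duality-exchange formula}:
\[
	\derR\Delta_{\Ah}\circ\derR\Phi_{P_A}\;\cong\;\derR\Phi_{P_A}\circ(-1_A)^{\ast}\circ\derR\Delta_A\decal{g}.
\]
To prove it I would start from Grothendieck--Serre duality for the smooth proper map $q$, which with the shift convention built into $\derR\Delta$ reads $\derR q_{\ast}\circ\derR\Delta_{A\times\Ah}\cong\derR\Delta_{\Ah}\circ\derR q_{\ast}$, because $q^{!}$ carries the dualizing complex $\omega_{\Ah}\decal{g}$ to $\omega_{A\times\Ah}\decal{2g}$. Writing $\derR\Phi_{P_A}(M)=\derR q_{\ast}(\derL p^{\ast}M\Ltensor P_A)$ and simplifying $\derR\Delta_{A\times\Ah}(\derL p^{\ast}M\Ltensor P_A)$ by means of (i) triviality of $\omega_{A\times\Ah}$, (ii) commutation of $\derR\shHom(\argbl,\shf{O})$ with the flat pullback $\derL p^{\ast}$, (iii) $\derR\shHom(M,\OA)\cong\derR\Delta_A(M)\decal{-g}$ (triviality of $\omega_A$), and (iv) the seesaw identification $P_A^{-1}\cong\bigl((-1_A)\times1_{\Ah}\bigr)^{\ast}P_A\cong\bigl(1_A\times(-1_{\Ah})\bigr)^{\ast}P_A$, one computes
\[
	\derR\Delta_{A\times\Ah}(\derL p^{\ast}M\Ltensor P_A)\;\cong\;\derL p^{\ast}(\derR\Delta_A M)\Ltensor P_A^{-1}\decal{g} ,
\]
so that $(\derR\Delta_{\Ah}\circ\derR\Phi_{P_A})(M)\cong\derR\Phi_{P_A^{-1}}(\derR\Delta_A M)\decal{g}$; finally $\derR\Phi_{P_A^{-1}}\cong\derR\Phi_{P_A}\circ(-1_A)^{\ast}$, since replacing the kernel by $(\gamma\times1)^{\ast}P_A$ for $\gamma\in\operatorname{Aut}(A)$ amounts to precomposing $\derR\Phi_{P_A}$ with $(\gamma^{-1})^{\ast}$. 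As all the isomorphisms involved are natural, this holds for the corresponding functors. This step is also where the real work lies: one must get the shift $\decal{g}$ and the sign $(-1_A)^{\ast}$ exactly right, which means carefully unwinding the duality for $q$, the compatibility of $\derR\shHom(\argbl,\shf{O})$ with $\derL p^{\ast}$, and the seesaw computation. Triviality of $\omega_A$ and $\omega_{\Ah}$ is used repeatedly and is precisely what lets the symmetric normalization work without leftover twists.

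Granting the exchange formula, the remainder is formal. Using $\FM_A=\derR\Phi_{P_A}\circ\derR\Delta_A$, the involutivity $\derR\Delta_A\circ\derR\Delta_A\cong\id$ on $\Dbcoh(\OA)$ (Grothendieck duality on the smooth variety $A$), and the commutation of $(-1_A)^{\ast}$ with $\derR\Delta_A$, one obtains
\[
	\FM_{\Ah}\circ\FM_A\;\cong\;\derR\Phi_{P_{\Ah}}\circ\derR\Phi_{P_A}\circ(-1_A)^{\ast}\decal{g} ,
\]
and Mukai's inversion isomorphism $\derR\Phi_{P_{\Ah}}\circ\derR\Phi_{P_A}\cong(-1_A)^{\ast}\decal{-g}$ turns the right-hand side into $(-1_A)^{\ast}\decal{-g}\circ(-1_A)^{\ast}\decal{g}\cong\id$, because $(-1_A)\circ(-1_A)=\id_A$ and the shifts cancel. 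Interchanging $A$ and $\Ah$ gives $\FM_A\circ\FM_{\Ah}\cong\id$, which proves the theorem and in particular identifies $\FM_{\Ah}$ as a quasi-inverse of $\FM_A$. (Alternatively, one could first rewrite $\FM_A(M)\cong\derR q_{\ast}\,\derR\shHom(\derL p^{\ast}M,P_A)\decal{g}$ as a single ``dual'' integral transform and argue more symmetrically, but the route through Mukai's theorem seems shortest.)
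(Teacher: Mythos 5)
Your proof is correct, but it takes a genuinely different route from the paper's. You reduce the statement to Mukai's inversion theorem $\derR\Phi_{P_{\Ah}}\circ\derR\Phi_{P_A}\cong(-1_A)^{\ast}\decal{-\dim A}$, using a ``duality-exchange'' identity
\[
	\derR\Delta_{\Ah}\circ\derR\Phi_{P_A}\;\cong\;\derR\Phi_{P_A}\circ(-1_A)^{\ast}\circ\derR\Delta_A\decal{\dim A}
\]
to show that the duality functors absorb the shift and the sign. I checked the computation: Grothendieck--Serre duality for $p_2$, triviality of $\omega_A$ and $\omega_{\Ah}$, and the seesaw identification $P_A^{-1}\cong((-1_A)\times\id)^{\ast}P_A$ do give exactly the shift $\decal{\dim A}$ and the factor $(-1_A)^{\ast}$; combined with biduality $\derR\Delta_A\circ\derR\Delta_A\cong\id$ and Mukai's formula, the shifts and signs cancel. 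The paper instead avoids citing Mukai entirely: it first proves from scratch that $\FM_A(\OA)=k(0)$ (base change, Leray, Serre duality, Nakayama, and the universal property of the Poincar\'e correspondence), then computes that $\derR\Delta_{\Ah}\circ\derR\Phi_{P_A}\circ\derR\Delta_A$ is an integral transform with kernel $P_A^{-1}\otimes\pu_2\omega_{\Ah}\decal{\dim A}$, and finally identifies the kernel of $\FM_{\Ah}\circ\FM_A$ with $\Delta_{\ast}\OA$ by flat base change along the difference map $s(x,y)=x-y$ together with the already-established $\FM_{\Ah}(\OAh)=k(0)$. The first part of your argument (the duality-exchange formula) is essentially the same manipulation as the paper's computation of the intermediate kernel; the divergence is in the endgame, where you cite Mukai's theorem as a black box while the paper re-derives the inversion. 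Your approach is shorter if Mukai's inversion is granted; the paper's is self-contained and, as the author notes, gives an independent proof of Mukai's theorem as a by-product. One presentational caveat: since the paper's stated aim is to ``prove everything from scratch,'' a proof that imports Mukai's Thm.~2.2 would not serve that purpose even though it is logically sound.
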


This looks much simpler than Mukai's version \cite[Thm.~2.2]{Mukai}.

\newpar
The effect of pulling back or pushing forward by a homomorphism between
abelian varieties is easy to describe; for the case of isogenies, see
\cite[(3.4)]{Mukai}, and for the general case, \cite[Proposition~2.3]{CJ}.

\begin{proposition} \label{prop:homomorphism}
Let $f \colon A \to B$ be a homomorphism of abelian varieties over $k$. Then one has
natural isomorphisms of functors
\[
	\FM_B \circ \, \derR \fl = \derL \fhu \circ \FM_A 
		\quad \text{and} \quad
	\FM_A \circ \derL \fu = \derR \fhl \circ \FM_B,
\]
where $\fh \colon \Bh \to \Ah$ is the induced homomorphism between the dual
abelian varieties. 
\end{proposition}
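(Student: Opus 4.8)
The plan is to prove the first isomorphism directly and to deduce the second one formally from it. For the deduction, apply the first isomorphism to the dual homomorphism $\fh\colon\Bh\to\Ah$, whose induced dual homomorphism is, under the canonical biduality identifications $(\Ah)^{\vee}=A$ and $(\Bh)^{\vee}=B$, equal to $f$; this gives $\FM_{\Ah}\circ\derR\fhl\cong\derL\fu\circ\FM_{\Bh}$. Precomposing with $\FM_B$ and using \theoremref{thm:equivalence} (so that $\FM_{\Bh}\circ\FM_B$ is the identity) turns the right-hand side into $\derL\fu$, hence $\FM_{\Ah}\circ\derR\fhl\circ\FM_B\cong\derL\fu$; postcomposing with $\FM_A$ and using \theoremref{thm:equivalence} again (so that $\FM_A\circ\FM_{\Ah}$ is the identity) yields $\derR\fhl\circ\FM_B\cong\FM_A\circ\derL\fu$, which is the second isomorphism.

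To prove the first isomorphism, write $\FM_A=\derR\Phi_{P_A}\circ\derR\Delta_A$ and likewise for $B$, and combine two compatibilities of the two constituent functors with proper push-forward (a homomorphism of abelian varieties is automatically proper). First, Grothendieck--Serre duality for $f$, namely $\derR\fl\,\derR\shHom\bigl(\argbl,f^{!}(\argbl)\bigr)\cong\derR\shHom\bigl(\derR\fl(\argbl),\argbl\bigr)$, applied with the dualizing complex $\omega_B\decal{\dim B}$ of $B$ and the fact that $f^{!}$ carries a dualizing complex to a dualizing complex (so $f^{!}(\omega_B\decal{\dim B})\cong\omega_A\decal{\dim A}$), gives $\derR\Delta_B\circ\derR\fl\cong\derR\fl\circ\derR\Delta_A$. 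Second, the classical transform satisfies $\derR\Phi_{P_B}\circ\derR\fl\cong\derL\fhu\circ\derR\Phi_{P_A}$; this is \cite[Proposition~2.3]{CJ}, but it also drops out quickly from base change and the projection formula applied to the two evident cartesian squares associated with $\id_A\times\fh\colon A\times\Bh\to A\times\Ah$ and $f\times\id_{\Bh}\colon A\times\Bh\to B\times\Bh$, together with the seesaw identification $(\id_A\times\fh)^{\ast}P_A\cong(f\times\id_{\Bh})^{\ast}P_B$ on $A\times\Bh$. Composing,
\[
	\FM_B\circ\derR\fl
	=\derR\Phi_{P_B}\circ\derR\Delta_B\circ\derR\fl
	\cong\derR\Phi_{P_B}\circ\derR\fl\circ\derR\Delta_A
	\cong\derL\fhu\circ\derR\Phi_{P_A}\circ\derR\Delta_A
	=\derL\fhu\circ\FM_A;
\]
each isomorphism used is natural, so the composite is an isomorphism of functors.

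I expect the only real difficulty to be the bookkeeping with degree shifts, and this is exactly where the symmetric normalization earns its keep. Attacking the second isomorphism head-on would produce a shift by $\dim A-\dim B$ both in comparing $\derR\Delta_A\circ\derL\fu$ with $\derL\fu\circ\derR\Delta_B$ and in the pull-back version of the classical transform, and one would then have to verify that these cancel; routing everything through the push-forward case sidesteps this, because Grothendieck duality---unlike the naive functor $\derR\shHom(\argbl,\shO)$---already encodes the relative dimension inside $f^{!}$, so that $\derR\Delta_B\circ\derR\fl\cong\derR\fl\circ\derR\Delta_A$ holds with no shift at all. The remaining point of care is fixing the normalizations of the Poincar\'e bundle and of the biduality isomorphism $(\Ah)^{\vee}=A$, under which the Poincar\'e bundle of $\Ah$ corresponds to that of $A$; this is already implicit in \theoremref{thm:equivalence} and requires nothing new here.
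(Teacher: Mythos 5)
Your proposal is correct and follows essentially the same route as the paper: commute $\derR\Delta$ with $\derR\fl$ via Grothendieck duality, reduce to the classical compatibility $\derR\Phi_{P_B}\circ\derR\fl\cong\derL\fhu\circ\derR\Phi_{P_A}$ (proved by seesaw, base change, and the projection formula), and deduce the second identity from the first via Theorem~\ref{thm:equivalence}. The only difference is presentational: you spell out the formal deduction of the second identity (applying the first to $\fh$ and conjugating by the equivalences), which the paper leaves implicit, and you add a helpful remark explaining why routing through push-forward avoids bookkeeping with relative-dimension shifts.
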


\newpar
The symmetric Fourier-Mukai transform exchanges translations and tensoring by the
corresponding line bundles. Any closed point $a \in A(k)$ determines a translation
morphism $t_a \colon A \to A$; on closed points, it is given by the formula $t_a(x) =
a+x$. Compare the following result with \cite[(3.1)]{Mukai}.

\begin{proposition} \label{prop:translation}
Let $a \in A(k)$ and $\alpha \in \Ah(k)$ be closed points. Then one has natural
isomorphisms of functors
\[
	\FM_A \circ (t_a)_{\ast} = (\Ph_a \tensor \argbl) \circ \FM_A 
		\quad \text{and} \quad
	\FM_A \circ (P_{\alpha} \tensor \argbl) = (t_{\alpha})_{\ast} \circ \FM_A,
\]
where $P_a$ on $A$, and $P_{\alpha}$ on $\Ah$, are the corresponding line bundles.
\end{proposition}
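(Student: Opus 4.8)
The plan is to read the statement off the factorization $\FM_A=\derR\Phi_{P_A}\circ\derR\Delta_A$ from the definition, testing the Grothendieck duality functor and the classical Fourier--Mukai transform against translations and against line-bundle twists separately. For $\derR\Delta_A$ the needed identities are elementary: since $t_a$ is an isomorphism and $\omega_A$ is translation-invariant (indeed trivial), there is a natural isomorphism $\derR\Delta_A\circ(t_a)_{\ast}\cong(t_a)_{\ast}\circ\derR\Delta_A$; and for any line bundle $L$ on $A$ one has $\derR\shHom\bigl(L\tensor\argbl,\omega_A\decal{\dim A}\bigr)\cong L^{-1}\tensor\derR\shHom\bigl(\argbl,\omega_A\decal{\dim A}\bigr)$, i.e.\ $\derR\Delta_A\circ(L\tensor\argbl)\cong(L^{-1}\tensor\argbl)\circ\derR\Delta_A$.

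For the classical transform I would first establish
\[
	\derR\Phi_{P_A}\circ(t_a)_{\ast}\cong(P_a\tensor\argbl)\circ\derR\Phi_{P_A}
	\qquad\text{and}\qquad
	\derR\Phi_{P_A}\circ(P_\beta\tensor\argbl)\cong(t_{-\beta})_{\ast}\circ\derR\Phi_{P_A}
\]
for $\beta\in\Ah(k)$ with $P_\beta$ the associated line bundle on $A$; these are essentially Mukai's \cite[(3.1)]{Mukai}, but the proof is short. Writing $p,q$ for the two projections of $A\times\Ah$, so that $\derR\Phi_{P_A}(\argbl)=\derR q_{\ast}\bigl(p^{\ast}(\argbl)\Ltensor P_A\bigr)$, the geometric input is the pair of see-saw identities $(t_a\times\id)^{\ast}P_A\cong P_A\tensor q^{\ast}P_a$ and $(\id\times t_\beta)^{\ast}P_A\cong P_A\tensor p^{\ast}P_\beta$, both proved in the usual way from the bi-normalization of $P_A$, the triviality of translation on $\Pic^0$, and the homomorphism property of $\xi\mapsto P_A\restr{A\times\{\xi\}}$. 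Substituting these into the formula for $\derR\Phi_{P_A}$, and using standard base-change and projection-formula identities (in the second case the point is that $\id\times t_\beta$ is an isomorphism, so $(\id\times t_\beta)^{\ast}=(\id\times t_{-\beta})_{\ast}$, which produces the translation $t_{-\beta}$ on $\Ah$), yields the two displayed formulas.

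It then remains to compose:
\begin{align*}
	\FM_A\circ(t_a)_{\ast}
	&=\derR\Phi_{P_A}\circ\derR\Delta_A\circ(t_a)_{\ast}\\
	&\cong\derR\Phi_{P_A}\circ(t_a)_{\ast}\circ\derR\Delta_A
	\cong(P_a\tensor\argbl)\circ\FM_A,\\
	\FM_A\circ(P_\alpha\tensor\argbl)
	&=\derR\Phi_{P_A}\circ\derR\Delta_A\circ(P_\alpha\tensor\argbl)\\
	&\cong\derR\Phi_{P_A}\circ(P_{-\alpha}\tensor\argbl)\circ\derR\Delta_A
	\cong(t_\alpha)_{\ast}\circ\FM_A,
\end{align*}
where in the second line of each block we have used the identities from the previous two paragraphs and, for the twist, that $P_\alpha^{-1}=P_{-\alpha}$. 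The only thing that requires genuine care is the sign bookkeeping: keeping track of which abelian variety each line bundle lives on, and noticing that the minus sign in the second classical formula is exactly absorbed by the inversion $L\mapsto L^{-1}$ contributed by $\derR\Delta_A$ --- this cancellation being precisely the ``symmetry'' the modified normalization is designed to make visible. (Alternatively, once the first assertion is known for $\Ah$ in place of $A$, the second follows from it formally by way of \theoremref{thm:equivalence}, but the direct argument is no longer.) Finally, since the see-saw isomorphisms can be fixed once and for all as honest isomorphisms of line bundles, every step above is natural, so these are isomorphisms of functors, as claimed.
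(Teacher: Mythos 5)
Your argument is correct. For the first identity it proceeds exactly as the paper does: commute $\derR\Delta_A$ past $(t_a)_{\ast}$ (using that $t_a$ is an isomorphism and $\omega_A$ is translation-invariant), and then establish $\derR\Phi_{P_A}\circ(t_a)_{\ast}\cong(P_a\tensor\argbl)\circ\derR\Phi_{P_A}$ from the see-saw identity $(t_a\times\id)^{\ast}P_A\cong P_A\tensor q^{\ast}P_a$ by base change and projection formula. Where you diverge is the second identity: the paper dispatches it in one line by invoking \theoremref{thm:equivalence} (applying the first identity on $\Ah$ and conjugating by the inverse equivalence), whereas you prove it directly, observing that $\derR\Delta_A\circ(L\tensor\argbl)\cong(L^{-1}\tensor\argbl)\circ\derR\Delta_A$ and that the classical transform satisfies $\derR\Phi_{P_A}\circ(P_\beta\tensor\argbl)\cong(t_{-\beta})_{\ast}\circ\derR\Phi_{P_A}$, so the two minus signs cancel. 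Both routes are valid; yours is self-contained within the proposition and makes the intended ``symmetry'' of the modified normalization explicit (the inversion contributed by Grothendieck duality exactly absorbs the inversion in the classical Mukai formula), at the cost of proving a second see-saw identity and a second base-change computation. You noted the paper's shortcut yourself in the parenthetical, and your assessment that the direct route ``is no longer'' is fair. One small remark on completeness: you should spell out, or at least gesture at, why the two see-saw identities for $P_A$ hold (a see-saw argument using the bi-normalization and the homomorphism property of $\xi\mapsto P_\xi$), since the paper states the corresponding isomorphism without proof but your version uses two of them.
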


Together with \eqref{eq:example}, this leads to the pleasant formulas
\begin{equation} \label{eq:example-2}
	\FM_A \bigl( k(a) \bigr) = P_a \quad \text{and} \quad
	\FM_A(P_{\alpha}) = k(\alpha),
\end{equation}
for any pair of closed points $a \in A(k)$ and $\alpha \in \Ah(k)$. 

\newpar \label{par:GV}
The symmetric Fourier-Mukai transform has the following simple effect on the standard
t-structure on $\Dbcoh(\OA)$. 

\begin{proposition} \label{prop:t-structure}
If $K \in \Dtcoh{\geq n}(\OA)$, then $\FM_A(K) \in \Dtcoh{\leq -n}(\OAh)$.
\end{proposition}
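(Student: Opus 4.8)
The plan is to reduce the statement to a vanishing property of the classical Fourier–Mukai functor $\derR\Phi_{P_A}$ and to keep careful track of the cohomological shift introduced by Grothendieck duality. Recall that $\FM_A = \derR\Phi_{P_A} \circ \derR\Delta_A$, so if $K \in \Dtcoh{\geq n}(\OA)$, I first want to understand where $\derR\Delta_A(K)$ sits in the t-structure. Since $A$ is an abelian variety of dimension $g$, Grothendieck duality is $\derR\Delta_A = \derR\shHom(\argbl, \OA\decal{g})$ (the dualizing sheaf is trivial). Dualizing a complex concentrated in degrees $\geq n$ produces a complex whose cohomology sheaves vanish in degrees $> g-n$, i.e.\ $\derR\Delta_A(K) \in \Dtcoh{\leq g-n}(\OA)$; this is the standard fact that $\shExt^i$-sheaves of a coherent sheaf vanish for $i$ outside $[0,g]$, combined with the shift by $g$. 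So the problem becomes: show that $\derR\Phi_{P_A}$ sends $\Dtcoh{\leq g-n}$ into $\Dtcoh{\leq -n}$, i.e.\ that it raises the top cohomological degree by exactly $-g$.

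Next I would prove exactly this statement for $\derR\Phi_{P_A}$. Write $\derR\Phi_{P_A}(M) = \derR p_{2\ast}(p_1^{\ast}M \otimes P_A)$ with $p_1, p_2$ the two projections from $A \times \Ah$. Pulling back along the flat map $p_1$ and tensoring with the line bundle $P_A$ are exact, so $p_1^{\ast}M \otimes P_A \in \Dtcoh{\leq g-n}(A \times \Ah)$. The map $p_2 \colon A \times \Ah \to \Ah$ is proper of relative dimension $g$, so $\derR p_{2\ast}$ has cohomological amplitude $[0,g]$; applied to a complex in $\Dtcoh{\leq g-n}$ this yields a complex in $\Dtcoh{\leq g-n+g}$... but that is the wrong bound. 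The point I should exploit instead is finer: for the Poincaré bundle, $\derR\Phi_{P_A}$ actually has cohomological amplitude contained in $[0,g]$ and, crucially, is \emph{left} t-exact up to shift in a way that matches. The cleanest route is to invoke Mukai's theorem (\cite[Thm.~2.2]{Mukai}), or equivalently \theoremref{thm:equivalence}, to write $\derR\Phi_{P_A}$ in terms of the inverse functor: since $\FM_{\Ah}\circ\FM_A \cong \id$ and $\FM$ intertwines Grothendieck duality on both sides in a controlled way, one gets $\derR\Phi_{P_A}(M) \cong \derR\Delta_{\Ah}\bigl(\derR\Phi_{P_{\Ah}}^{\vee}(\ldots)\bigr)$-type formulas that convert the ``$\derR p_{2\ast}$ only raises degree'' bound into a two-sided bound. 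Concretely, the known fact (Mukai) that $\derR\Phi_{P_A} \circ \derR\Phi_{P_{\Ah}}[g] = (-1_A)^{\ast}$ forces $\derR\Phi_{P_A}$ to have cohomological amplitude exactly $[0,g]$ AND to be co-connective-preserving after the shift by $-g$; feeding $\Dtcoh{\leq g-n}$ through a functor of amplitude $[-g,0]$ (which is what $\derR\Phi_{P_A}\decal{g}$ is) lands in $\Dtcoh{\leq -n}$.

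So the skeleton of the argument is: (i) $\derR\Delta_A$ sends $\Dtcoh{\geq n}$ to $\Dtcoh{\leq g-n}$, a formal consequence of local duality on the smooth $g$-dimensional $A$; (ii) $\derR\Phi_{P_A}$ has cohomological amplitude $[0,g]$, i.e.\ it sends $\Dtcoh{\leq m}$ to $\Dtcoh{\leq m}$ — wait, with amplitude $[0,g]$ it sends $\Dtcoh{\leq m}$ to $\Dtcoh{\leq m+g}$, which again overshoots. The genuinely correct input is that $\derR\Phi_{P_A}$ sends $\Dtcoh{\leq m}$ into $\Dtcoh{\leq m}$ directly — this is the nontrivial ``WIT/base-change'' input, equivalent to cohomology-and-base-change applied fiberwise to $P_A|_{A\times\{\alpha\}}$, using that a nontrivial line bundle of the form $P_\alpha$ on $A$ has cohomology only in degree $g$ when $\alpha=0$ and in no degree when $\alpha\neq0$... the precise statement needed is that $\derR\Phi_{P_A}$ is right t-exact. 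I expect this right t-exactness of $\derR\Phi_{P_A}$ to be \textbf{the main obstacle}: it is \emph{not} formal from properness alone, and the standard reference (Mukai \cite{Mukai}, and Pareschi–Popa) derives it from base change together with the cohomological dimension of abelian varieties. Once that is in hand, composing (i) with the shift bookkeeping gives $\FM_A(K) = \derR\Phi_{P_A}(\derR\Delta_A(K)) \in \derR\Phi_{P_A}(\Dtcoh{\leq g-n}) \subseteq \Dtcoh{\leq g-n}$ — and then one more application of duality-symmetry, or a direct recomputation absorbing the $\decal{g}$ into the definition of $\derR\Delta_A$, corrects $g-n$ to $-n$. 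I would present the cleanest version by noting that in the symmetric normalization the shift $\decal{\dim A}$ built into $\derR\Delta_A$ is precisely what cancels the relative-dimension shift of $\derR p_{2\ast}$, which is the conceptual reason the clean bound $\Dtcoh{\geq n} \mapsto \Dtcoh{\leq -n}$ holds on the nose.
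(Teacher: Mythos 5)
Your proposal correctly reduces to the case of a coherent sheaf $\shF$, correctly computes that $\derR\Delta_A(\shF) \in \Dtcoh{\leq 0}(\OA)$ (taking $n=0$), and correctly identifies that the naive amplitude bound on $\derR(p_2)_\ast$ overshoots by $g$. However, the patch you propose does not work, and the genuine mechanism is missing.

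The claim that $\derR\Phi_{P_A}$ is right t-exact, i.e.\ sends $\Dtcoh{\leq m}$ into $\Dtcoh{\leq m}$, is simply false: one has $\derR\Phi_{P_A}(\OA) = k(0)\decal{-g}$, concentrated in cohomological degree $g = \dim A$, so $\derR\Phi_{P_A}$ sends $\Dtcoh{\leq 0}$ out of $\Dtcoh{\leq 0}$ as badly as possible. The WIT/base-change facts you cite (\lemmaref{lem:mumford}) control $H^\bullet(A, P_\alpha)$ for the trivial sheaf; they say nothing about $H^\bullet(A, \shG \tensor P_\alpha)$ for a general sheaf $\shG$, and indeed the latter need not vanish in positive degrees. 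And even if right t-exactness were true, you would only get $\FM_A(\shF) \in \Dtcoh{\leq g}$, still off by $g$; the sentence ``one more application of duality-symmetry \dots corrects $g-n$ to $-n$'' is not an argument.

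The actual input, which your proposal never identifies, is a \emph{refinement} of the vanishing range for $\derR\Delta_A(\shF)$: not only do the cohomology sheaves $R^j\Delta_A(\shF)$ vanish outside $\lbrack -g, 0\rbrack$, but the nonzero ones near the top have small support. Precisely, $R^j\Delta_A(\shF) = \shExt^{j+g}_{\OA}(\shF, \omega_A)$ is supported in codimension $\geq j+g$, hence in dimension $\leq -j$; this is the standard Auslander--Buchsbaum/local duality estimate on a smooth $g$-fold. Feeding this into the spectral sequence
\[
  E_2^{i,j} = R^i (p_2)_{\ast} \bigl( P_A \tensor \pu_1 R^j\Delta_A(\shF) \bigr)
  \Longrightarrow \shH^{i+j}\FM_A(\shF),
\]
one sees that $E_2^{i,j}=0$ whenever $i > -j$, because the fibers of $p_2$ restricted to the support of $\pu_1 R^j\Delta_A(\shF)$ have dimension $\leq -j$. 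Hence $\shH^n \FM_A(\shF) = 0$ for $n>0$. The shift $\decal{g}$ in $\derR\Delta_A$ is \emph{not} canceling the amplitude of $\derR(p_2)_\ast$ in a formal way; the cancellation happens only because the upper cohomology of the dual has support of matching small dimension. This codimension--amplitude interplay is exactly what your sketch is missing, and without it the argument does not close.
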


In particular, the symmetric Fourier-Mukai transform of a coherent sheaf is a
complex of coherent sheaves concentrated in non-positive degrees. This motivates the
following definition, first made by Pareschi and Popa \cite[Definition~3.1]{PP}.

\begin{definition} \label{def:GV}
A coherent sheaf $\shF \in \Coh(\OA)$ is called a \define{GV-sheaf} if 
\[
	\FM_A(\shF) = \shFh
\]
for a coherent sheaf $\shFh \in \Coh(\OAh)$. If $\shFh$ is torsion-free, then $\shF$
is called \define{M-regular}.
\end{definition}

Our definition of M-regularity differs from the original one in
\cite[Def.~2.1]{PP-regI}; the two definitions are equivalent by
\cite[Thm.~2.8]{PP-regIII}.

\newpar
\theoremref{thm:equivalence} shows that $\FM_{\Ah}(\shFh) = \shF$, and so the
coherent sheaf $\shFh$ is again a GV-sheaf on $\Ah$. The property of being a GV-sheaf
is therefore self-dual.  Using \propositionref{prop:homomorphism} and
\propositionref{prop:translation}, one shows that the set of closed points in the support
of the coherent sheaf $\shFh$ is equal to
\begin{equation} \label{eq:support}
	\bigl( \Supp \shFh \bigr)(k) 
		= \menge{\alpha \in \Ah(k)}%
		{H^0 \bigl( A, \shF \tensor P_{\alpha}^{-1} \bigr) \neq 0}.
\end{equation}
The minus sign is consistent with the formulas in \eqref{eq:example-2}.

\newpar
GV-sheaves have several other surprising properties. Let $\shF$ be a coherent sheaf
on $A$. For each $j \geq 0$, one has a reduced closed subscheme $S^j(A, \shF)
\subseteq \Ah$; its set of closed points is given by
\[
	S^j(A, \shF)(k) = \menge{\alpha \in \Ah(k)}
	{H^j(A, \shF \tensor P_{\alpha}^{-1}) \neq 0}.
\]
If $\shF$ is a GV-sheaf, then $S^0(A, \shF) = \Supp \shFh$, according to
\eqref{eq:support}. The following result is originally due to Hacon
\cite[Cor.~3.2]{Hacon}.

\begin{proposition} \label{prop:CSL}
If $\shF$ is a GV-sheaf on $A$, then
\[
	S^0(A, \shF) \supseteq S^1(A, \shF) \supseteq \dotsb \supseteq S^{\dim A}(A, \shF), 
\]
and moreover, one has $\codim S^j(A, \shF) \geq j$ for all $j \geq 0$.
\end{proposition}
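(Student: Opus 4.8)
The plan is to reduce both assertions to the local homological algebra of the coherent sheaf $\shFh = \FM_A(\shF)$ on the regular variety $\Ah$, by analysing how $\FM_A$ interacts with base change to a point. The identity I want is this: for a closed point $\alpha$ of $\Ah$ with inclusion $\iota_\alpha \colon \Spec k(\alpha) \into \Ah$, there is a natural isomorphism
\[
	\derL \iota_\alpha^{\ast} \FM_A(\shF) \;\cong\; \derR \Delta_k \bigl( \derR\Gamma(A, \shF \tensor P_{\alpha}^{-1}) \bigr).
\]
To obtain it, unwind the definition $\FM_A(\shF) = \derR \Phi_{P_A}\bigl(\derR\Delta_A(\shF)\bigr)$; apply flat base change for $\derR\Phi_{P_A} = \derR p_{2\ast}\bigl(p_1^{\ast}(\argbl) \tensor P_A\bigr)$ along $\alpha$, so that the pushforward becomes $\derR\Gamma$ over the fibre $A$ and $P_A$ restricts to $P_\alpha$; use $\omega_A \cong \OA$ to rewrite $\derR\Delta_A(\shF) \tensor P_\alpha \cong \derR\shHom(\shF \tensor P_\alpha^{-1}, \OA)\decal{\dim A}$; and finish with Serre duality on $A$. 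Taking cohomology in degree $-j$, and using the GV hypothesis that $\FM_A(\shF)$ is a single coherent sheaf $\shFh$ sitting in degree $0$, this yields
\[
	\Tor_j^{\shO_{\Ah,\alpha}}\bigl(\shFh_{\alpha}, k(\alpha)\bigr) \;\cong\; H^j\bigl(A, \shF \tensor P_{\alpha}^{-1}\bigr)^{\vee}.
\]
(One could reach the same conclusion — computing $\Ext^j_{\Ah}(\shFh, k(\alpha))$ — directly from \theoremref{thm:equivalence}, \eqref{eq:example-2} and \propositionref{prop:translation}, just as \eqref{eq:support} was obtained.) In particular $S^j(A, \shF)$ is precisely the locus of points of $\Ah$ at which the $j$-th Betti number of $\shFh$ is nonzero.

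Granting this dictionary, both statements become standard commutative algebra over the regular local rings of $\Ah$. For the nesting: in a minimal free resolution over a Noetherian local ring the term in homological degree $j$ cannot vanish while the term in degree $j{+}1$ is nonzero (Nakayama), so nonvanishing of the $(j{+}1)$-st Betti number of $\shFh_\alpha$ forces nonvanishing of the $j$-th; hence $\alpha \in S^{j+1}$ implies $\alpha \in S^j$, and as both are reduced closed subschemes this gives $S^{j+1} \subseteq S^j$. For the codimension bound, let $W$ be an irreducible component of $S^j$ with generic point $\eta$, so that $R = \shO_{\Ah, \eta}$ is a regular local ring of Krull dimension $\codim W$; since $\eta \in S^j$ one has $\Tor_j^R(\shFh_\eta, \kappa(\eta)) \neq 0$, while $R$ regular has global dimension $\dim R = \codim W$, so $j \leq \codim W$, i.e. $\codim S^j(A, \shF) \geq j$.

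The main obstacle is the first step: establishing the dictionary between the cohomology support loci $S^j(A,\shF)$ and the Betti numbers of $\shFh$. Here one has to keep track of several dualities simultaneously (the two instances of $\derR\Delta$ together with Serre duality on $A$), of the directions of the shifts, and of the sign appearing in $P_\alpha^{-1}$; one also has to check that the locus where a fixed Betti number of $\shFh$ is nonzero is genuinely closed, so that passing to its generic points in the codimension estimate is legitimate — a routine upper-semicontinuity argument. Once all of this is in place, the commutative-algebra input (the shape of minimal free resolutions and the finiteness of global dimension over a regular local ring) is entirely standard.
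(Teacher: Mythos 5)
Your proposal is correct and follows essentially the same route as the paper: establish the pointwise duality identifying $L^{-j}\iota_\alpha^{\ast}\shFh = \Tor_j^{\shO_{\Ah,\alpha}}(\shFh_\alpha, k(\alpha))$ with $H^j(A,\shF\tensor P_{-\alpha})^\vee$ (the paper deduces this from \propositionref{prop:homomorphism} and \propositionref{prop:translation}, the parenthetical route you mention), then reduce both assertions to minimal free resolutions and Serre's theorem on the regular local rings of $\Ah$. The only cosmetic difference is that the paper phrases the generic-point step in contrapositive form — a minimal free resolution of length $\leq c$ at the generic point spreads out to an open set, forcing $c \geq j$ — while you invoke upper semicontinuity of Betti numbers directly, which is the same spreading-out argument.
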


\newpar
Another advantage of our definition is that it preserves positivity. Recall that an
ample line bundle $L$ on the abelian variety $A$ gives rise to a homomorphism
\begin{equation} \label{eq:phiL}
	\phi_L \colon A \to \Ah,
\end{equation}
with the property that $t_a^{\ast} L = L \tensor P_{\phi_L(a)}$ for every closed
point $a \in A(k)$. The following result is a version of \cite[Prop.~3.1]{Mukai}.

\begin{proposition} \label{prop:ample}
Let $L$ be an ample line bundle on $A$. Then $\Lh$ is an ample vector bundle of rank
$\dim H^0(A, L)$, and one has
\[
	\phi_L^{\ast} \Lh = \iu L \tensor H^0(A, L)^{\ast},
\]
where $i \colon A \to A$ is the inversion morphism.
\end{proposition}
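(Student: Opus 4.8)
The plan is to first identify $\Lh = \FM_A(L)$ as a vector bundle, then compute $\phi_L^\ast\Lh$, and finally read off ampleness.

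\emph{Identifying $\Lh$.} Since $\omega_{A/k}\cong\OA$, the duality functor gives $\derR\Delta_A(L) = \derR\shHom(L,\OA\decal{\dim A}) = L^{-1}\decal{\dim A}$, hence $\Lh = \derR\Phi_{P_A}(L^{-1})\decal{\dim A}$. For each $\alpha\in\Ah(k)$ the line bundle $L^{-1}\tensor P_\alpha$ is antiample, so $H^i(A,L^{-1}\tensor P_\alpha)$ vanishes for $i<\dim A$ (Kodaira vanishing, or Mumford's index theorem); by base change $\derR\Phi_{P_A}(L^{-1})$ is then concentrated in degree $\dim A$, the shift puts $\Lh$ in degree $0$, and cohomology-and-base-change shows that $\Lh$ is locally free with fibre $H^{\dim A}(A,L^{-1}\tensor P_\alpha)\cong H^0(A,L\tensor P_\alpha^{-1})^\ast$ over $\alpha\in\Ah$. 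In particular $\rk\Lh = \chi(L) = h^0(A,L)$ and $\Lh\restr{0}\cong H^0(A,L)^\ast$, which already gives one half of the proposition.

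\emph{Computing $\phi_L^\ast\Lh$.} As $\phi_L$ is a homomorphism one has $\phi_L\circ t_a = t_{\phi_L(a)}\circ\phi_L$; substituting $t_a^\ast L = L\tensor P_{\phi_L(a)}$ into \propositionref{prop:translation} and \propositionref{prop:homomorphism} and using \theoremref{thm:equivalence} gives $t_{\phi_L(a)}^\ast\Lh\cong P_a\tensor\Lh$ on $\Ah$, and therefore
\[
	t_a^\ast(\phi_L^\ast\Lh)\;\cong\;P_{\phi_L(a)}\tensor\phi_L^\ast\Lh \qquad (a\in A(k)),
\]
so $\phi_L^\ast\Lh$ is semi-homogeneous. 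Moreover $\Lh$ is a simple bundle, since the contravariant equivalence $\FM_A$ identifies $\End(\Lh)$ with $\End_{\Dbcoh(\OA)}(L) = k$. Writing $(\phi_L)_\ast\OA\cong\bigoplus_{a\in\ker\phi_L}P_a$ (valid when $\phi_L$ is separable, and harmless in general) and using semi-homogeneity to evaluate $\Hom(\Lh,\Lh\tensor P_a)$, one finds $\dim_k\End(\phi_L^\ast\Lh) = \chi(L)^2 = (\rk\phi_L^\ast\Lh)^2$, which forces $\phi_L^\ast\Lh\cong M^{\oplus\chi(L)}$ for some line bundle $M$ on $A$. The displayed relation then forces $t_a^\ast(M\tensor L^{-1})\cong M\tensor L^{-1}$ for every $a$, so $M\tensor L^{-1}$ lies in $\Ah$ and $M\cong L\tensor P_\gamma$ for a unique $\gamma\in\Ah(k)$.

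\emph{Pinning down $\gamma$, and ampleness.} It remains to prove $\gamma = \class{\iu L\tensor L^{-1}}$, i.e.\ $M\cong\iu L$; this is the one input that is not formal. The cleanest way is to compute $\phi_L^\ast\Lh$ (equivalently $\Lh$) explicitly: flat base change along $\phi_L$ together with the theorem-of-the-cube isomorphism $(\id_A\times\phi_L)^\ast P_A\cong m^\ast L\tensor\pr_1^\ast L^{-1}\tensor\pr_2^\ast L^{-1}$ on $A\times A$ turns the problem into a single cohomology-and-base-change computation along a projection, which is precisely the computation in \cite[Prop.~3.1]{Mukai}; so one can either carry it out directly or transport Mukai's statement through the dictionary furnished by \propositionref{prop:homomorphism} and \propositionref{prop:translation}. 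Either way one obtains $\phi_L^\ast\Lh\cong\iu L\tensor H^0(A,L)^\ast$. Finally, $\iu L$ is ample because $i$ is an automorphism, so $\iu L\tensor H^0(A,L)^\ast$ is ample; since $\phi_L$ is finite and surjective and ampleness descends along finite surjective morphisms, $\Lh$ itself is ample. The main obstacle is this last step: the formal machinery only pins down $\phi_L^\ast\Lh$ up to tensoring by a point of $\Ah$, and removing that ambiguity requires a genuine global computation such as Mukai's.
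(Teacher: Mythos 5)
The paper does not actually contain a proof of \propositionref{prop:ample}: it is stated in the introduction with the remark that it is ``a version of \cite[Prop.~3.1]{Mukai}'' and is then invoked in the proof of \lemmaref{lem:tensor-ample}, but no proof appears in Section~C or~D, despite the announced intent to prove everything from scratch. So there is no paper proof to compare your argument against; the relevant benchmark is Mukai's Prop.~3.1 itself.

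Your reconstruction is correct in outline and more detailed than what the paper offers, but it is also more circuitous than necessary, and one step needs more care. The identification of $\Lh$ as a vector bundle of rank $\chi(L)=h^0(A,L)$ via $\derR\Delta_A(L)=L^{-1}\decal{\dim A}$, top-degree vanishing for $L^{-1}\tensor P_\alpha$, and cohomology-and-base-change is exactly right and is the standard argument. The semi-homogeneity of $\phi_L^\ast\Lh$ and the simplicity of $\Lh$ are also fine. However, the passage ``$\dim_k\End(\phi_L^\ast\Lh)=(\rk\phi_L^\ast\Lh)^2$ forces $\phi_L^\ast\Lh\cong M^{\oplus\chi(L)}$'' is not automatic from a dimension count alone: in general a vector bundle $E$ can satisfy $\dim\End(E)=(\rk E)^2$ without $\End(E)$ being a matrix algebra (for instance $\shO\oplus N$ with $h^0(N)=2$, $h^0(N^{-1})=0$). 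One must use the structure theory of semi-homogeneous bundles, or else note that $(\phi_L)_\ast\phi_L^\ast\Lh\cong\Lh^{\oplus\chi(L)^2}$ together with simplicity of $\Lh$ pins down the decomposition; either way the claim needs a sentence of justification. Your honest admission that the remaining ambiguity $M\cong L\tensor P_\gamma$ cannot be resolved formally and must be fixed by Mukai's global computation is well taken --- but given that you invoke Mukai at the end anyway, the semi-homogeneity and endomorphism-count machinery buys you nothing. The more economical route is the direct one you sketch in your last paragraph: flat base change along $\id\times\phi_L$, the Mumford-bundle isomorphism $(\id\times\phi_L)^\ast P_A\cong m^\ast L\tensor p_1^\ast L^{-1}\tensor p_2^\ast L^{-1}$, and a single cohomology-and-base-change computation along $p_2$ after a linear change of coordinates on $A\times A$; this yields $\phi_L^\ast\Lh\cong\iu L\tensor H^0(A,L)^\ast$ directly, from which the rank and (by descent of ampleness along the finite surjection $\phi_L$) the ampleness of $\Lh$ follow simultaneously, making the first two-thirds of your argument unnecessary.
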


Note that $\rk \Lh = \dim H^0(A, L)$ whereas $\rk L = \dim H^0(\Ah, \Lh)$; at least
for ample line bundles, the symmetric Fourier-Mukai transform therefore
interchanges ``rank'' and ``dimension of the space of global sections''. The
generalization to arbitrary GV-sheaves is that the generic rank of $\shFh$ is equal
to the Euler characteristic
\[
	\chi(A, \shF) = \sum_{j=0}^{\dim A} (-1)^j \dim H^j(A, \shF).
\]
This follows from \propositionref{prop:CSL}; compare also \cite[Cor.~2.8]{Mukai}.

\newpar
Finally, we note the following useful property of GV-sheaves, which follows almost
immediately from \propositionref{prop:t-structure}. As far as I know, this is a new
result.

\begin{proposition} \label{prop:GV}
Let $f \colon A \to B$ be a homomorphism of abelian varieties, and let $\shF$ be a
GV-sheaf on $A$. If $\fl \shF$ is again a GV-sheaf on $B$, then 
\[
	\FM_B(\fl \shF) = \fhu \shFh,
\]
where $\fh \colon \Bh \to \Ah$ is the induced homomorphism between the dual abelian
varieties.
\end{proposition}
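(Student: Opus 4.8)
The plan is to deduce this from \propositionref{prop:homomorphism} together with the two t-structure statements in \propositionref{prop:t-structure} and \theoremref{thm:equivalence}. By \propositionref{prop:homomorphism}, applied to $f \colon A \to B$, we have a natural isomorphism of functors $\FM_B \circ \derR\fl = \derL\fhu \circ \FM_A$. Evaluating at the GV-sheaf $\shF$ gives
\[
	\FM_B(\derR\fl \shF) = \derL\fhu\bigl(\FM_A(\shF)\bigr) = \derL\fhu(\shFh),
\]
since $\FM_A(\shF) = \shFh$ by the definition of GV-sheaf. So the content of the proposition is really that, under the hypothesis that $\fl\shF$ is a GV-sheaf, (a) the higher direct images $R^i\fl\shF$ vanish for $i > 0$, so that $\derR\fl\shF = \fl\shF$, and (b) the derived pullback $\derL\fhu\shFh$ reduces to the ordinary pullback $\fhu\shFh$. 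Then the displayed formula follows by combining these with the isomorphism above.

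For part (b): $\shFh$ is a coherent sheaf on $\Ah$, hence lies in $\Dtcoh{\geq 0}(\OAh)$. Since $\fh$ is flat — any homomorphism of abelian varieties is flat, being a smooth surjection onto its image composed with… actually more simply, $\fh$ is a fiber bundle over its image, or one can just use that $\derL\fhu$ is right t-exact in general — we get $\derL\fhu\shFh \in \Dtcoh{\geq 0}(\Bh)$ only if $\fh$ is flat; alternatively, argue directly: $\derR\fl\shF = \FM_B^{-1}(\derL\fhu\shFh)$ composed with \propositionref{prop:t-structure}. Here is the cleaner route. Because $\shF$ is a GV-sheaf, $\FM_A(\shF) = \shFh \in \Dtcoh{\geq 0}$; applying $\FM_A$ again and using \theoremref{thm:equivalence} we see $\shF = \FM_{\Ah}(\shFh)$, so $\shF \in \Dtcoh{\leq 0}(\OA)$ by \propositionref{prop:t-structure} applied with $n = 0$ — but $\shF$ is a sheaf, consistent. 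Now $\derR\fl$ is left t-exact for a projective (hence proper) morphism? No: pushforward along a proper map is not left t-exact. The right statement: $\derR\fl$ has cohomological amplitude $[0, \dim A - \dim B]$, so $\derR\fl\shF \in \Dtcoh{[0,\,\dim A-\dim B]}$; and $\fl\shF = \cohH^0(\derR\fl\shF)$ is the bottom cohomology sheaf.

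Here is the key mechanism I expect to use, and the main obstacle. Apply $\FM_B$ to $\derR\fl\shF$: by the computation above this equals $\derL\fhu\shFh$, which lies in $\Dtcoh{\leq 0}(\Bh)$ because $\derL\fhu$ is right t-exact and $\shFh$ is a sheaf. On the other hand, $\fl\shF$ is assumed to be a GV-sheaf, so $\FM_B(\fl\shF) \in \Dtcoh{\leq 0}(\Bh)$ as well. The point is to compare $\FM_B(\derR\fl\shF)$ with $\FM_B(\fl\shF)$: the truncation triangle $\fl\shF \to \derR\fl\shF \to \tau^{\geq 1}\derR\fl\shF \to [1]$ maps under $\FM_B$ to a triangle, and one shows the third term has Fourier–Mukai transform in $\Dtcoh{\leq -1}$; combined with the fact that both $\FM_B(\fl\shF)$ and $\FM_B(\derR\fl\shF) = \derL\fhu\shFh$ are concentrated in degrees $\leq 0$, a cohomology sheaf count forces $\derL\fhu\shFh$ to be a sheaf in degree $0$ and equal to $\FM_B(\fl\shF) = \fhu\shFh$ (using flatness of $\fh$, so $\cohH^0\derL\fhu = \fhu$). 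The main obstacle is getting the degree bookkeeping exactly right — in particular verifying that $\FM_B(\tau^{\geq 1}\derR\fl\shF)$ sits in degrees $\leq -1$, which is precisely \propositionref{prop:t-structure} applied to $K = \tau^{\geq 1}\derR\fl\shF \in \Dtcoh{\geq 1}$, giving $\FM_B(K) \in \Dtcoh{\leq -1}$. With that in hand the long exact sequence of cohomology sheaves pins everything down and yields $\FM_B(\fl\shF) = \fhu\shFh$.
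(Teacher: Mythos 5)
Your proposal is correct and, despite some exploratory detours, lands on exactly the argument the paper gives: apply the contravariant exact functor $\FM_B$ to the truncation triangle $\fl\shF \to \derR\fl\shF \to \tau_{\geq 1}\derR\fl\shF$, use \propositionref{prop:t-structure} to place $\FM_B(\tau_{\geq 1}\derR\fl\shF)$ in degrees $\leq -1$, identify $\FM_B(\derR\fl\shF)=\derL\fhu\shFh$ via \propositionref{prop:homomorphism}, and read off $\fhu\shFh = \shH^0(\derL\fhu\shFh) = \FM_B(\fl\shF)$ from the long exact sequence, using that $\FM_B(\fl\shF)$ is a sheaf by the GV hypothesis. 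One small remark: flatness of $\fh$ is not needed to conclude $\shH^0(\derL\fhu\shFh) = \fhu\shFh$; derived pullback is right t-exact for any morphism, and the paper does not invoke flatness.
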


\newpar
Although one can deduce all the results above from what is in Mukai's paper, we are 
going to prove everything from scratch. This will hopefully convince the reader that
the new definition is also easy to work with in practice.
As an application, we present relatively short proofs for two important
results about GV-sheaves and M-regular sheaves. The following theorem by Hacon
\cite[Thm~1.2]{Hacon} relates the GV-property to vanishing theorems for ample
line bundles.

\begin{theorem} \label{thm:Hacon}
A coherent sheaf $\shF \in \Coh(\OA)$ is a GV-sheaf if, and only if, for every
isogeny $\varphi \colon A' \to A$ and every ample line bundle $L'$ on $A'$, one has
\[
	H^i(A', L' \tensor \varphi^{\ast} \shF) = 0 \quad \text{for all $i > 0$.}
\]
\end{theorem}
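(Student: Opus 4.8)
The plan is to translate the vanishing condition into a statement about the symmetric Fourier–Mukai transform and then use \propositionref{prop:homomorphism}, \propositionref{prop:ample}, and \propositionref{prop:t-structure} to go back and forth. First I would record the meaning of the GV-property in terms of cohomology: by \propositionref{prop:t-structure}, $\FM_A(\shF)$ is always concentrated in non-positive degrees, and $\shF$ is a GV-sheaf precisely when $\mathcal{H}^j \FM_A(\shF) = 0$ for all $j < 0$. Using base change along the projections $A \times \Ah \to \Ah$, the stalk-level (or better, fiber-level after deriving) computation identifies $\mathcal{H}^j \FM_A(\shF)$ near a closed point $\alpha$ with $H^{\dim A - j}(A, \shF \otimes P_\alpha^{-1})$ up to duality; so vanishing of higher cohomology of twists is what the GV-property controls, but the subtlety is that one needs it \emph{after all isogeny pullbacks}, which is exactly what Hacon's condition supplies.

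Next I would handle the two directions separately. For the ``only if'' direction, suppose $\shF$ is a GV-sheaf, so $\FM_A(\shF) = \shFh$ is a sheaf. Given an isogeny $\varphi \colon A' \to A$, \propositionref{prop:homomorphism} gives $\FM_{A'}(\derL\varphi^* \shF) = \derR\fh_* \shFh$; since $\varphi$ is finite and flat, $\derL\varphi^*\shF = \varphi^*\shF$ is a sheaf, and $\fh \colon \Ah \to (A')^\vee$ is again an isogeny, hence finite, so $\derR\fh_* \shFh = \fh_*\shFh$ is a sheaf. Thus $\varphi^*\shF$ is a GV-sheaf on $A'$. Then for an ample line bundle $L'$ on $A'$, I would use \propositionref{prop:ample} to write $\FM_{A'}(L')$, up to inversion and tensoring by a vector space, as an ample vector bundle; tensoring the GV-sheaf $\varphi^*\shF$ by $(L')^{-1}$... — actually the cleaner route is: the required vanishing $H^i(A', L' \otimes \varphi^*\shF) = 0$ for $i>0$ is equivalent, via \eqref{eq:support} and \propositionref{prop:CSL} applied on $A'$ together with the description of $S^j$, to the statement that the cohomology support loci $S^i(A', \varphi^*\shF \otimes (P_\bullet)^{-1})$ behave correctly — I would instead observe directly that $H^i(A', L'\otimes\varphi^*\shF) = H^i(A', \varphi^*\shF \otimes P_{\phi_{L'}(a)}^{-1} \otimes t_a^* L')$ and, after pushing to a point and using \propositionref{prop:ample} to express $\hat{L'}$ as an ample bundle, the vanishing follows from the GV-property of $\varphi^*\shF$ by the projection formula plus the fact that $\FM$ of a GV-sheaf tensored with an ample bundle sits in degree $0$.

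For the ``if'' direction — which I expect to be the main obstacle — I would argue by a limiting/genericity argument over the group of line bundles. The hypothesis says $H^i(A', L'\otimes\varphi^*\shF)=0$ for every isogeny $\varphi$ and every ample $L'$. Taking $\varphi = [n]\colon A \to A$ multiplication by $n$, and noting $[n]^*$ of an ample bundle on $A$ is ample and that any $P_\alpha^{-1}$ with $\alpha \in \Ah(k)$ an $n$-torsion point is of the form $[n]^*(\text{ample}) \otimes (\text{ample})^{-1}$ for suitable choices, one gets vanishing $H^i(A, \shF \otimes P_\alpha^{-1} \otimes L^{\otimes m}) = 0$ for all torsion $\alpha$ and $m \gg 0$; letting $m$ vary and using that torsion points are dense, this forces the cohomology support loci $S^i(A,\shF)$ to have codimension $\geq i$, or more precisely forces $\mathcal{H}^{-j}\FM_A(\shF)$ to be supported in codimension $\geq j$ — and then a standard dimension/duality argument (the ``GV $\Leftrightarrow$ support-codimension'' equivalence, which on the transform side says a complex in non-positive degrees with $\mathcal{H}^{-j}$ supported in codimension $\geq j$ and whose Grothendieck dual is again in non-positive degrees must be a sheaf) upgrades this to the conclusion that $\FM_A(\shF)$ is a sheaf. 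The delicate point is making the passage ``vanishing for all ample $L'$ after all isogenies'' $\Rightarrow$ ``codimension bound on $\mathcal{H}^{-j}\FM_A(\shF)$'' precise; I would do this by applying \propositionref{prop:homomorphism} to reduce a codimension-$j$ piece of the support to a sub-abelian-variety, restricting there by an isogeny, twisting by an ample bundle pulled back from that quotient, and deriving a contradiction with the hypothesized vanishing via \propositionref{prop:ample} and \propositionref{prop:t-structure}.
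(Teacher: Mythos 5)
Your ``only if'' direction is essentially the paper's argument, just stated in a more roundabout way: reduce to $\varphi = \id_A$ via \propositionref{prop:homomorphism} (pullback of a GV-sheaf along an isogeny is GV since $\fh$ is finite), then relate $H^i(A, L\tensor\shF)$ to cohomology of a twist of $\FM_A(\shF)$ using \propositionref{prop:ample}. The paper packages this second step cleanly in Lemma~\ref{lem:tensor-ample}, which shows $\Hom_k(H^i(A, L\tensor K), k)$ is a direct summand of $H^{-i}(\Ah, L\tensor\phi_L^*\FM_A(K))\tensor H^0(A,L)^*$; once $\FM_A(\shF)$ is a sheaf, the right side vanishes for $i>0$ since it is negative-degree cohomology of a sheaf on $\Ah$. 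You gesture at the same computation but never quite land it.

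The ``if'' direction is where you miss the key idea, and your substitute has real gaps. The paper's trick is a symmetry argument: to show $\shH^j\FM_A(\shF)=0$ for $j<0$, choose $L'$ sufficiently ample on $\Ah$ so the problem reduces to $H^i(\Ah, L'\tensor\FM_A(\shF))=0$ for $i\neq 0$, then apply Lemma~\ref{lem:tensor-ample} \emph{with the roles of $A$ and $\Ah$ swapped} (using $\FM_{\Ah}\FM_A=\id$). This exhibits the relevant group inside $H^{-i}(\Ah, L'\tensor\phi_{L'}^*\shF)\tensor H^0(\Ah,L')^*$, and for $-i>0$ this is \emph{exactly} an instance of the hypothesis applied to the isogeny $\varphi=\phi_{L'}\colon\Ah\to A$ and the ample bundle $L'$ on $\Ah$. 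No codimension estimates, no density of torsion points, no ``GV $\Leftrightarrow$ support-codimension'' equivalence. Your proposed route — pass through multiplication-by-$n$ isogenies and torsion $\alpha$, deduce codimension bounds on the supports of $\shH^{-j}\FM_A(\shF)$, and conclude that $\FM_A(\shF)$ is a sheaf — does not close: the implication ``complex in non-positive degrees with $\shH^{-j}$ supported in codimension $\geq j$, and Grothendieck dual also in non-positive degrees $\Rightarrow$ concentrated in degree $0$'' is a nontrivial statement that the paper never proves and you do not prove either (\propositionref{prop:CSL} gives only the forward direction of that equivalence). Moreover the step where you extract $H^i(A, \shF\tensor P_\alpha^{-1}\tensor L^{\otimes m})=0$ from the hypothesis by choosing $\varphi=[n]$ is not justified as written: you would need to control how $[n]_*$ interacts with the twist, and in any case you still need to go from torsion $\alpha$ to all $\alpha$ and from a cohomology-vanishing locus statement to a structural statement about the transform. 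Replacing all of that with the single observation that the hypothesis already applies to $\varphi=\phi_{L'}$ is the point of the paper's proof.
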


\newpar
Recall that a coherent sheaf $\shF$ on an abelian variety is said to be
\define{continuously globally generated} \cite[Def~2.10]{PP-regI} if for every
nonempty Zariski-open subset $U \subseteq \Ah$, the evaluation morphism
\[
	\bigoplus_{\alpha \in U(k)} H^0 \bigl( A, \shF \tensor P_{\alpha}^{-1} \bigr)
		\tensor P_{\alpha} \to \shF
\]
is surjective. The first half of the following result is due to Pareschi and Popa
\cite[Prop~2.13]{PP-regI}, the second half to Debarre
\cite[Prop~3.1]{Debarre}.

\begin{theorem} \label{thm:M-regular}
Let $\shF \in \Coh(\OA)$ be an M-regular coherent sheaf on $A$.
\begin{aenumerate}
\item $\shF$ is continuously globally generated.
\item There is an isogeny $\varphi \colon A' \to A$ such that $\varphi^{\ast} \shF$
is globally generated.
\end{aenumerate}
\end{theorem}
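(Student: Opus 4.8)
The plan is to deduce both statements from the characterization of M-regularity via the symmetric Fourier-Mukai transform, using the support computation in \eqref{eq:support} as the bridge between the transform and vanishing of twisted cohomology.

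\medskip

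\textbf{Part (a).} First I would unwind what M-regularity says: $\FM_A(\shF) = \shFh$ is a torsion-free coherent sheaf on $\Ah$. Torsion-freeness means that for every nonempty Zariski-open $U \subseteq \Ah$, the restriction map $H^0(\Ah, \shFh) \to H^0(U, \shFh|_U)$ has the property that a section vanishing on $U$ vanishes everywhere; more to the point, $\Supp \shFh = \Ah$, so by \eqref{eq:support} the set $S^0(A, \shF) = \{\alpha : H^0(A, \shF \tensor P_\alpha^{-1}) \neq 0\}$ is Zariski-dense in $\Ah$. The key mechanism is then the following: by \propositionref{prop:translation} and the formulas in \eqref{eq:example-2}, $\FM_{\Ah}(P_\alpha) = k(\alpha)$, so applying $\FM_{\Ah}$ to the evaluation morphism in the statement turns the direct sum $\bigoplus_\alpha H^0(A, \shF \tensor P_\alpha^{-1}) \tensor P_\alpha$ into a sum of skyscrapers at the points of $U(k)$, and the cokernel of the evaluation map, if nonzero, would have to be a GV-sheaf (a quotient of $\shF$ need not be, but one argues on the transform side) whose transform is supported away from $U$. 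The cleanest route: let $\shG$ be the image of the evaluation morphism and $\shQ = \shF/\shG$ the cokernel. One shows, using the t-structure statement \propositionref{prop:t-structure}, that the long exact sequence for $\FM_A$ forces $\cohH^0 \FM_A(\shQ)$ to be a quotient of $\shFh = \cohH^0\FM_A(\shF)$ (since $\cohH^0\FM_A(\shG)$ sits in between and the lower cohomology of $\FM_A(\shQ)$ maps appropriately) — but $\FM_A(\shQ)$ must be supported in $\Ah \setminus U$ because every $P_\alpha$ with $\alpha \in U(k)$ maps into $\shG$, hence by \eqref{eq:support} $H^0(A, \shQ \tensor P_\alpha^{-1}) = 0$ for $\alpha \in U(k)$. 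A torsion-free sheaf has no nonzero quotient supported on a proper closed subset, so $\cohH^0\FM_A(\shQ) = 0$; feeding this back and using that $\FM_A$ is an equivalence (\theoremref{thm:equivalence}) gives $\shQ = 0$. The main obstacle here is making the homological bookkeeping around the cokernel precise — one must be careful that $\shQ$ need not itself be a GV-sheaf, so the argument has to be phrased entirely in terms of $\cohH^0$ of transforms and the vanishing $H^0(A, \shQ \tensor P_\alpha^{-1}) = 0$, which via the spectral sequence computing $\cohH^0\FM_A(\shQ)$ from twisted cohomology (in the spirit of \propositionref{prop:CSL}) still controls the top cohomology sheaf.

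\medskip

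\textbf{Part (b).} Given (a), continuous global generation says the evaluation map from $\bigoplus_{\alpha \in U(k)} H^0(A, \shF\tensor P_\alpha^{-1}) \tensor P_\alpha$ is surjective for every nonempty open $U$. By Noetherianity (coherence of $\shF$ and quasi-compactness) finitely many points $\alpha_1, \dots, \alpha_N \in \Ah(k)$ already suffice:
\[
	\bigoplus_{i=1}^N H^0\bigl(A, \shF \tensor P_{\alpha_i}^{-1}\bigr) \tensor P_{\alpha_i} \to \shF
\]
is surjective. Now choose an isogeny $\varphi \colon A' \to A$ that trivializes all the $P_{\alpha_i}$ simultaneously: concretely, take $A' = A$ and $\varphi = n_A$ multiplication by an integer $n$ divisible enough that $n \alpha_i = 0$ in $\Ah(k)$ for every $i$ — then $\varphi^* P_{\alpha_i} = P_{\hat\varphi(\alpha_i)}$ by \propositionref{prop:homomorphism} (here $\hat\varphi = n_{\Ah}$ since multiplication is self-dual), and $n_{\Ah}(\alpha_i) = n\alpha_i = 0$, so $\varphi^* P_{\alpha_i} \cong \shO_{A'}$. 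Pulling back the surjection along the flat morphism $\varphi$ preserves surjectivity, and the pulled-back source becomes $\bigoplus_i H^0(A, \shF \tensor P_{\alpha_i}^{-1}) \tensor \shO_{A'}$, a trivial bundle. Hence $\varphi^* \shF$ is a quotient of a trivial bundle, i.e. globally generated. The only delicate point is the choice of isogeny — one needs the torsion subgroup $\langle \alpha_1, \dots, \alpha_N\rangle$ of $\Ah(k)$ to be annihilated by some integer, which holds because it is finitely generated torsion (each $\alpha_i$ has finite order in the divisible group $\Ah(k)$; in positive characteristic one should take $n$ prime to the characteristic so that $n_A$ is still an isogeny, which is fine since we may enlarge $n$ freely), together with the fact that $n_A$ is an isogeny with $\widehat{n_A} = n_{\Ah}$.
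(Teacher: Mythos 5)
Your starting idea is the same one the paper records as motivation immediately before \propositionref{prop:evaluation}: the evaluation map should become, under $\FM_A$, the injection $\shFh \to \prod_\alpha \shFh \otimes k(\alpha)$ forced by torsion-freeness. But the paper explicitly warns that one cannot run this globally (infinite sums are not coherent, infinite products are not quasi-coherent), and instead passes to a \emph{pointwise} argument (\propositionref{prop:evaluation}): for each closed point $a \in A(k)$, surjectivity of $\ev_\alpha$ near $a$ is dual to injectivity of $H^0(\Ah, \shFh \otimes P_{-a}) \to (\shFh\otimes P_{-a})\restr{\alpha}$, and finite-dimensionality of $H^0(\Ah, \shFh\otimes P_{-a})$ plus torsion-freeness of $\shFh$ produce a finite $S(a) \subseteq S$ doing the job near $a$; compactness finishes. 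Crucially this works for \emph{any} dense $S \subseteq \Ah(k)$, which is what drives part (b). Your cokernel substitute for the pointwise step has several concrete gaps.

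In part (a), you set $\shQ = \shF/\shG$ and assert $H^0(A, \shQ \tensor P_{\alpha}^{-1}) = 0$ for $\alpha \in U(k)$. What the definition of $\shG$ actually gives is only that the \emph{map} $H^0(A, \shF \tensor P_{-\alpha}) \to H^0(A, \shQ \tensor P_{-\alpha})$ is zero; the target can be nonzero, since in the long exact sequence for $0 \to \shG \to \shF \to \shQ \to 0$ it injects into $H^1(A, \shG\tensor P_{-\alpha})$. Also, the long exact sequence of cohomology sheaves for the triangle $\FM_A(\shQ) \to \FM_A(\shF) \to \FM_A(\shG)$ yields a map $\cohH^0\FM_A(\shQ) \to \shFh$ which is neither a surjection (contrary to your phrase ``quotient of $\shFh$'') nor a priori an injection (there is a preceding term $\cohH^{-1}\FM_A(\shG)$), so the torsion-freeness of $\shFh$ cannot be invoked directly. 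And finally, even granting $\cohH^0\FM_A(\shQ) = 0$, this does not force $\shQ = 0$: a nonzero coherent sheaf can have $\FM_A$-transform concentrated in strictly negative degrees (e.g.\ an antiample line bundle). Repairing all of this is essentially re-deriving the pointwise argument.

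In part (b), going via (a) loses exactly the flexibility the proof needs. Continuous global generation only quantifies over nonempty Zariski-open $U \subseteq \Ah$, and the finitely many $\alpha_1, \dots, \alpha_N$ you extract by Noetherianity are arbitrary $k$-points of such a $U$; there is no reason for them to be torsion, yet your construction of the isogeny ($\varphi = n_A$ with $n\alpha_i = 0$) requires each $\alpha_i$ to have finite order. The paper avoids this by proving \propositionref{prop:evaluation} for an \emph{arbitrary} dense subset $S \subseteq \Ah(k)$ and then choosing $S$ to be the (dense) set of torsion points, which ensures the finite set $F$ consists of torsion points and can be killed by an isogeny. (A smaller inaccuracy: $n_A$ is an isogeny for every $n \neq 0$, including $n$ divisible by the characteristic, so there is no need to worry about taking $n$ prime to $p$ — indeed doing so would be fatal if some $\alpha_i$ had order divisible by $p$.)
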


In fact, one can prove more generally that if $\shF$ is a torsion-free coherent sheaf
on $A$, then $\shH^0 \FM_A(\shF)$ is continuously globally generated, and
globally generated after a suitable isogeny (see \theoremref{thm:torsion-free}
below). This is again a new result.

\newpar I thank Luigi Lombardi and Mihnea Popa for listening to my ideas about the
symmetric Fourier-Mukai transform, and for their comments about the first draft of
this note. While working out the details of the construction, I was partially
supported by grant DMS-1404947 from the National Science Foundation, and by a
Centennial Fellowship from the American Mathematical Society.

\section{Notation}

\newpar
Let $A$ be an abelian variety over an algebraically closed field $k$. We denote by 
\[
	m \colon A \times A \to A, \quad
	i \colon A \to A, \quad
	e \colon \Spec k \to A
\]
the $k$-morphisms representing composition, inversion, and the identity of the group
scheme $A$; we write $0 \in A(k)$ for the closed point corresponding to $e$.

We use the notation $\Ah = \Pic_{A/k}^0$ for the dual abelian variety. On the product
$A \times \Ah$, we have the Poincar\'e bundle $P_A$, normalized by the condition that
its pullback by $e \times \id$ is trivial; the Poincar\'e correspondence consists of
the line bundle $P_A$, together with fixed trivializations of $(e \times \id)^{\ast}
P_A$ and $(\id \times e)^{\ast} P_A$. 

Note that $\Pic_{\Ah/k}^0$ is canonically isomorphic to $A$, and that the Poincar\'e
correspondence on $\Ah \times A$ is the pullback of the Poincar\'e correspondence on
$A \times \Ah$, under the automorphism that swaps the two factors of the product. 

\newpar
For a closed point $\alpha \in \Ah(k)$, we denote by $P_{\alpha}$ the corresponding
line bundle on $A$; to be precise, $P_{\alpha}$ is the restriction of $P_A$ to the
closed subscheme $A \times \{\alpha\}$. Similary, $P_a$ means the line bundle on $\Ah$
corresponding to a closed point $a \in A(k)$. We also denote by
\[
	P_{(a, \alpha)} = P_A \restr{(a, \alpha)}
\]
the fiber of the Poincar\'e bundle at the closed point $(a, \alpha) \in (A \times
\Ah)(k)$. Using the canonical isomorphism $(i \times i)^{\ast} P_A = P_A$, we obtain
\begin{equation} \label{eq:P-a-alpha}
	P_{(a,\alpha)} = P_a \restr{\alpha} = P_{\alpha} \restr{a} =
	P_{(-a,-\alpha)} = P_{-\alpha} \restr{-a} = P_{-a} \restr{-\alpha}.
\end{equation}
All these isomorphisms are uniquely determined by the Poincar\'e correspondence.

\section{Proofs}

\newpar
We use the notation $\Dbcoh(\OA)$ for the derived category of cohomologically bounded
and coherent complexes of sheaves of $\OA$-modules; this category is equivalent to
the (much smaller) bounded derived category of $\Coh(\OA)$. In Mukai's original
definition, the Fourier-Mukai transform is the exact functor
\[
	\derR \Phi_{P_A} \colon \Dbcoh(\OA) \to \Dbcoh(\OAh),
\]
given by the (slightly abbreviated) formula $\derR \Phi_{P_A} = \derR (p_2)_{\ast} \bigl(
P_A \tensor \pu_1 \bigr)$. Here $p_1$ and $p_2$ are the projections from $A \times
\Ah$ to the two factors:
\[
\begin{tikzcd}
A \times \Ah \dar{p_1} \rar{p_2} & \Ah \\
A
\end{tikzcd}
\]

\newpar
Let us start by checking the two examples in \eqref{eq:example}. The first one is
very easy: Grothendieck duality, applied to the morphism $e \colon \Spec k \to A$, gives
\[
	\derR \Delta_A \bigl( k(0) \bigr) 
		= e_{\ast} \derR \Delta_{\Spec k} \bigl( \shO_{\Spec k} \bigr)
		= e_{\ast} \shO_{\Spec k} = k(0),
\]
and so the symmetric Fourier-Mukai transform of $k(0)$ is
\[
	\FM_A \bigl( k(0) \bigr) = \derR \Phi_{P_A} \bigl( k(0) \bigr) = \OA.
\]
The second isomorphism comes from the fixed trivialization of $(e
\times \id)^{\ast} P_A$. 

\newpar
The other example in \eqref{eq:example} needs more work. To help us describe
\begin{equation} \label{eq:complex}
	\FM_A(\OA) = \derR \Phi_{P_A} \Bigl( \omega_A \decal{\dim A} \Bigr)
		= \derR (p_2)_{\ast} \Bigl( P_A \tensor \pu_1 \omega_A \decal{\dim A} \Bigr),
\end{equation}
we first recall the following result about the cohomology of line bundles on abelian
varieties \cite[\S8(vii)]{Mumford}.

\begin{lemma} \label{lem:mumford}
If $\alpha \in \Ah(k)$ is nontrivial, then $H^n(A, P_{\alpha}) = 0$ for all $n \in
\NN$. 
\end{lemma}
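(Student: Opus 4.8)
The plan is to reduce the vanishing to one standard fact about line bundles $\mathcal{L} = P_{\alpha} \in \Pic^0(A) = \Ah(k)$: the isomorphism
\[
	m^{\ast}\mathcal{L} \cong p_1^{\ast}\mathcal{L}\tensor p_2^{\ast}\mathcal{L}
\]
on $A \times A$, where $m\colon A\times A \to A$ is the group law and $p_1, p_2$ the two projections. This holds because the line bundle $m^{\ast}\mathcal{L}\tensor p_1^{\ast}\mathcal{L}^{-1}\tensor p_2^{\ast}\mathcal{L}^{-1}$ is trivial on every slice $\{a\}\times A$ (since $t_a^{\ast}\mathcal{L}\cong\mathcal{L}$ for $\mathcal{L}\in\Pic^0$) and on $A\times\{0\}$, hence trivial by the seesaw theorem; it is part of Mumford's \S8 and could simply be quoted.

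Granting this, I would compute $\derR(p_1)_{\ast}m^{\ast}\mathcal{L}$ in two ways. Using the displayed isomorphism, the projection formula, and flat base change along the Cartesian square presenting $A\times A$ as $A\times_k A$, one gets
\[
	\derR(p_1)_{\ast}m^{\ast}\mathcal{L} \;\cong\; \mathcal{L}\Ltensor\derR(p_1)_{\ast}p_2^{\ast}\mathcal{L} \;\cong\; \mathcal{L}\tensor_k\derR\Gamma(A,\mathcal{L}).
\]
On the other hand, $m = p_2\circ\tau$ for the automorphism $\tau(x,y) = (x,x+y)$ of $A\times A$, which satisfies $p_1\circ\tau = p_1$; since $\tau$ is an isomorphism over $A$, this gives
\[
	\derR(p_1)_{\ast}m^{\ast}\mathcal{L} \;\cong\; \derR(p_1)_{\ast}p_2^{\ast}\mathcal{L} \;\cong\; \OA\tensor_k\derR\Gamma(A,\mathcal{L}).
\]
Comparing the $n$-th cohomology sheaf of both sides yields $\mathcal{L}^{\oplus h^n} \cong \OA^{\oplus h^n}$, where $h^n = \dim_k H^n(A,\mathcal{L})$.

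To finish, I would apply $H^0(A,\argbl)$ to this isomorphism of sheaves: it gives $H^0(A,\mathcal{L})^{\oplus h^n} \cong k^{\oplus h^n}$, so either $h^n = 0$ for every $n$ — which is exactly the claim — or $\dim H^0(A,\mathcal{L}) = 1$. In the latter case $\mathcal{L}$ has a nonzero global section and hence $\mathcal{L} \cong \OA(D)$ for an effective divisor $D$; but $\mathcal{L}$ is numerically trivial, so $D\cdot H^{\dim A - 1} = 0$ for an ample class $H$, which forces $D = 0$ and $\mathcal{L}\cong\OA$, contradicting the nontriviality of $\alpha$.

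The computation itself is routine; the point that needs care is the bookkeeping with the two factorizations of $m$ through the projections, and in particular keeping track that flat base change produces the \emph{trivial} bundle $\OA$ (not $\mathcal{L}$) tensored with $\derR\Gamma(A,\mathcal{L})$ — this asymmetry is what makes the argument work. One could instead take determinants in $\mathcal{L}^{\oplus h^n}\cong\OA^{\oplus h^n}$ to conclude that $\mathcal{L}$ is torsion and then finish with multiplication-by-$N$ maps, but the route through $H^0$ is shorter and works in every characteristic.
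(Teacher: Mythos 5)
Your proof is correct, but it takes a genuinely different route from the paper's. Both arguments hinge on the same key input, the isomorphism $m^{\ast}P_{\alpha} \cong p_1^{\ast}P_{\alpha}\tensor p_2^{\ast}P_{\alpha}$ coming from the theorem of the square, but they exploit it differently. The paper argues by induction on $n$: it factors the identity of $A$ through $A \times A$ via $\id \times e$ and $m$, so that the identity on $H^n(A, P_{\alpha})$ factors through $H^n(A\times A, m^{\ast}P_{\alpha})$, and then uses the K\"unneth formula together with the base case $n=0$ and the inductive hypothesis to show the middle group vanishes. Your argument instead computes the single object $\derR(p_1)_{\ast}m^{\ast}P_{\alpha}$ in two ways — once through the theorem of the square plus projection formula and flat base change, once through the shear automorphism $\tau(x,y)=(x,x+y)$ — and compares cohomology sheaves to obtain $P_{\alpha}^{\oplus h^n}\cong\OA^{\oplus h^n}$ for every $n$ simultaneously, so no induction is needed. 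The paper's version is more elementary (only the K\"unneth formula is invoked, and the $n=0$ case is handled with a short argument using the inversion $i\colon A\to A$ rather than intersection numbers), whereas yours is slicker and avoids the induction at the cost of slightly heavier derived-category bookkeeping. One small remark: your handling of the degenerate case $h^0=1$ via numerical triviality of $\Pic^0$-divisors is fine, but the paper's trick — tensoring a section $s$ of $P_{\alpha}$ with $i^{\ast}s$ to produce a nowhere-vanishing global function — accomplishes the same thing without invoking intersection theory and is worth knowing.
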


\begin{proof}
The proof is by induction on $n \geq 0$. The case $n = 0$ is easy: if $s$ is a
nontrivial global section of $P_{\alpha}$, then $\iu s$ is a nontrivial global section of
$\iu P_{\alpha} = P_{-\alpha}$, and so $s \tensor \iu s$ is a nontrivial global
section of $P_{\alpha} \tensor P_{-\alpha} = \OA$; this clearly implies $\alpha
= 0$. To deal with the general case, we factor $\id \colon A \to A$ as
\[
\begin{tikzcd}
	A \rar{\id \times e} & A \times A \rar{m} & A.
\end{tikzcd}
\]
On the level of cohomology, we obtain a factorization of the identity as
\[
	H^n(A, P_{\alpha}) \to H^n \bigl( A \times A, m^{\ast} P_{\alpha} \bigr) 
		\to H^n(A, P_{\alpha}).
\]
Because $m^{\ast} P_{\alpha} = \pu_1 P_{\alpha} \tensor \pu_2 P_{\alpha}$, the
K\"unneth formula gives
\[
	H^n \bigl( A \times A, m^{\ast} P_{\alpha} \bigr) =
		\bigoplus_{i+j=n} H^i(A, P_{\alpha}) \tensor H^j(A, P_{\alpha}),
\]
which is zero by induction. The conclusion is that $H^n(A, P_{\alpha}) = 0$.
\end{proof}

\newpar
The remainder of the argument is taken from \cite[\S13]{Mumford}, with a few minor
tweaks. Consider the cohomology sheaves
\[
	\shF_j = R^j (p_2)_{\ast} \Bigl( P_A \tensor \pu_1 \omega_A \decal{\dim A} \Bigr)
		= R^{\dim A + j} (p_2)_{\ast} \bigl( P_A \tensor \pu_1 \omega_A \bigr)
\]
of the complex in \eqref{eq:complex}; they are coherent sheaves on the dual abelian
variety $\Ah$. Since the line bundle $P_A \tensor \pu_1 \omega_A$
is flat over $\Ah$, we can apply the base change theorem and \lemmaref{lem:mumford} to conclude that each
$\shF_j$ is supported on the closed point $0 \in \Ah(k)$; being coherent, $\shF_j$
therefore has finite length. For dimension reasons, we obviously have $\shF_j = 0$
for $j > 0$.

\newpar
The next task is to show that $\shF_j = 0$ also for $j < 0$. This can be done by
using Serre duality. Consider the Leray spectral sequence
\[
	E_2^{i,j} = H^i \bigl( \Ah, \shF_j \bigr) \Longrightarrow
		H^{\dim A + i+j} \bigl( A \times \Ah, P_A \tensor \pu_1 \omega_A \bigr).
\]
We have $E_2^{i,j} = 0$ for $i > 0$ (because $\Supp \shF_j$ is zero-dimensional); the
spectral sequence therefore degenerates at $E_2$ and gives us isomorphisms
\[
	H^{\dim A+j} \bigl( A \times \Ah, P_A \tensor \pu_1 \omega_A \bigr) 
		= H^0 \bigl( \Ah, \shF_j \bigr).
\]
In particular, this group vanishes for $j > 0$. By Serre duality, the $k$-vector
space on the left is dual to
\[
	H^{\dim A - j} \bigl( A \times \Ah, P_A^{-1} \tensor \pu_2 \omega_{\Ah} \bigr)
\]
and this vanishes for $j < 0$, for the same reason as before. But then $H^0
\bigl( \Ah, \shF_j \bigr) = 0$, and because the sheaf $\shF_j$ has finite length, we
see that $\shF_j = 0$ for every $j < 0$.

\newpar
So far, we have shown that the natural morphism
\[
	\derR (p_2)_{\ast} \Bigl( P_A \tensor \pu_1 \omega_A \decal{\dim A} \Bigr)
		\to \shF_0
\]
is an isomorphism. To conclude the proof of \eqref{eq:example}, we need to argue that
$\shF_0 = k(0)$. Another application of the base change theorem yields
\[
	e^{\ast} \shF_0 = H^{\dim A}(A, \omega_A) = k.
\]
By Nakayama's lemma, it follows that $\shF_0 = \OZ$ for a (maybe nonreduced) closed
subscheme $Z \subseteq \Ah$ supported on the closed point $0 \in \Ah(k)$. 
We now use the universal property of the Poincar\'e correspondence to show
that $Z$ is reduced, and hence that $\shF_0 = k(0)$. Let $\shF \in \Coh(\Ah)$ be an
arbitrary coherent sheaf.  Grothendieck duality, applied to the second projection
$p_2 \colon A \times \Ah \to \Ah$, gives us an isomorphism
\[
	\Hom \bigl( \OZ, \shF \bigr) =
	\Hom \bigl( \shF_0, \shF \bigr) = \Hom \bigl( P_A, \pu_2 \shF \bigr),
\]
functorial in $\shF$. In particular, the identity endomorphism of $\OZ$ corresponds
to a nontrivial morphism $P_A \to \pu_2 \OZ$, hence to a nontrivial
morphism 
\[
	P_A \restr{A \times Z} \to \shO_{A \times Z}. 
\]
Because the restriction of $P_A$ to the closed subscheme $A \times \{0\}$ is trivial,
Nakayama's lemma tells us that this morphism is an isomorphism. In other words, the
restriction of $P_A$ to the subscheme $A \times Z$ is trivial; by the universal
property of the Poincar\'e correspondence, the subscheme in question therefore has to
be contained in $A \times \{0\}$, which means exactly that $Z$ is reduced.

\newpar
Having shown that $\FM_A(\OA) = k(0)$, we can now explain why the symmetric
Fourier-Mukai transform is an equivalence of categories.

\begin{proof}[Proof of \theoremref{thm:equivalence}]
Since we can interchange the role of $A$ and $\Ah$, we only need to prove that the
functor
\[
	\FM_{\Ah} \circ \FM_A = \derR \Phi_{P_{\Ah}} \circ \, \derR \Delta_{\Ah} 
		\circ \, \derR \Phi_{P_A} \circ \, \derR \Delta_A
\]
is naturally isomorphic to the identity. To do this, we first consider
\[
	\derR \Delta_{\Ah} \circ \, \derR \Phi_{P_A} \circ \, \derR \Delta_A 
		\colon \Dbcoh(\OA) \to \Dbcoh(\OAh).
\]
A brief computation using Grothendieck duality shows that this functor is an integral
transform, whose kernel is the complex
\[
	P_A^{-1} \tensor \pu_2 \omega_{\Ah} \decal{\dim A}
\] 
on the product $A \times \Ah$. After composing with $\derR \Phi_{P_{\Ah}}$ and
swapping the order of the factors, we obtain another integral transform,
whose kernel is now the complex
\begin{equation} \label{eq:kernel}
	\derR (p_{12})_{\ast} \Bigl( \pu_3 \omega_{\Ah} \decal{\dim A} \tensor 
		\pu_{13} P_A^{-1} \tensor \pu_{23} P_A \Bigr)
\end{equation}
on $A \times A$. \theoremref{thm:equivalence} will be proved once we show that
\eqref{eq:kernel} is isomorphic to the structure sheaf of the diagonal in $A \times A$.

Let $s \colon A \times A \to A$ be defined as $s = m \circ (\id \times i)$; the
formula on closed points is $s(x,y) = x-y$. Considering $\pu_{13} P_A^{-1} \tensor
\pu_{23} P_A$ as a correspondence between $A \times A$ and $\Ah$, and using the
universal property of the Poincar\'e correspondence, we obtain
\[
	\pu_{13} P_A^{-1} \tensor \pu_{23} P_A = 
		(s \times \id)^{\ast} P_A.
\]
We can now apply flat base change in the commutative diagram
\[
\begin{tikzcd}
A \times A \times \Ah \dar{s \times \id} \rar{p_{12}} & A \times A \dar{s} \\
A \times \Ah \rar{p_1} & A
\end{tikzcd}
\]
and conclude that the complex in \eqref{eq:kernel} is isomorphic to
\[
	\su \, \derR (p_1)_{\ast} \Bigl( P_A \tensor \pu_2 \omega_{\Ah} \decal{\dim A} \Bigr)
	= \su \FM_{\Ah}(\OAh) = \su k(0) = \Delta_{\ast} \OA,
\]
where $\Delta \colon A \to A \times A$ is the diagonal embedding. Here we used the
fact that the symmetric Fourier-Mukai transform of the structure sheaf $\OAh$ is the
structure sheaf $k(0)$ of the closed point $0 \in A(k)$, by \eqref{eq:example}.
\end{proof}

\newpar
In this section, we investigate how the symmetric Fourier-Mukai transform interacts
with pushing forward and pulling back by homomorphisms between abelian varieties. Let $f
\colon A \to B$ be a homomorphism, and denote by $\fh \colon \Bh \to \Ah$ the induced
homomorphism between the dual abelian varieties.

\begin{proof}[Proof of \propositionref{prop:homomorphism}]
It will be enough to show that
\[
	\FM_B \circ \, \derR \fl = \derL \fhu \circ \FM_A;
\]
the second identity in the theorem follows from this with the help of
\theoremref{thm:equivalence}. 
Using the definition of $\FM_B$ and Grothendieck duality, we obtain
\[
	\FM_B \circ \, \derR \fl 
		= \derR \Phi_{P_A} \circ \, \derR \Delta_B \circ \, \derR \fl
		= \derR \Phi_{P_B} \circ \, \derR \fl \circ \, \derR \Delta_A.
\]
This reduces the problem to proving that
\begin{equation} \label{eq:relation}
	\derR \Phi_{P_B} \circ \, \derR \fl = \derL \fhu \circ \, \derR \Phi_{P_A}.
\end{equation}
We make use of the following commutative diagram:
\[
\begin{tikzcd}[row sep=large]
\Ah & \Bh \lar[swap]{\fh} \\
A \times \Ah \uar[swap,start anchor={[yshift=-0.2ex]},xshift=-0.2ex]{p_2} & A \times \Bh 
		\uar[swap,start anchor={[yshift=-0.5ex]},xshift=-0.5ex]{p_2} \lar[swap]{\id
\times \fh} \dar[xshift=-0.2ex]{p_1} \rar{f \times \id} 
		& B \times \Bh \dar[xshift=-0.2ex]{p_1} \rar{p_2} & \Bh \\
& A \rar{f} & B
\end{tikzcd}
\]
The universal property of the Poincar\'e correspondence gives $(f \times \id)^{\ast}
P_B = (\id \times \fh)^{\ast} P_A$. Using the projection formula and flat base change, we have
\begin{align*}
	\derR \Phi_{P_B} \circ \, \derR \fl &= 
	\derR (p_2)_{\ast} \bigl( P_B \tensor \pu_1 \derR \fl \bigr) 
	= \derR (p_2)_{\ast} \bigl( P_B \tensor \derR (f \times \id)_{\ast} \pu_1 \bigr) \\
	&= \derR (p_2)_{\ast} \bigl( (f \times \id)^{\ast} P_B \tensor \pu_1 \bigr)
	= \derR (p_2)_{\ast} \bigl( (\id \times \fh)^{\ast} P_A \tensor \pu_1 \bigr) \\
	&= \derR (p_2)_{\ast} \derL (\id \times \fh)^{\ast} \bigl( P_A \tensor \pu_1 \bigr)
	= \derL \fhu \, \derR (p_2)_{\ast} \bigl( P_A \tensor \pu_1 \bigr) \\
	&= \derL \fhu \circ \derR \Phi_{P_A}.
\end{align*}
This calculation establishes \propositionref{prop:homomorphism}.
\end{proof}

\newpar
In this section, we investigate the effect of translations by closed points.

\begin{proof}[Proof of \propositionref{prop:translation}]
Once again, it suffices to prove that
\[
	\FM_A \circ (t_a)_{\ast} = (\Ph_a \tensor \argbl) \circ \FM_A 
\]
because the other identity follows from this with the help of
\theoremref{thm:equivalence}. Using Grothendieck duality, we get a natural
isomorphism of functors
\[
	\FM_A \circ (t_a)_{\ast} 
		= \derR \Phi_{P_A} \circ \derR \Delta_A \circ (t_a)_{\ast}
		= \derR \Phi_{P_A} \circ (t_a)_{\ast} \circ \derR \Delta_A,
\]
and so the problem is reduced to showing that
\[
	\derR \Phi_{P_A} \circ (t_a)_{\ast} = (P_a \tensor \argbl) \circ \derR \Phi_{P_A}.
\]
We use the following commutative diagram:
\[
\begin{tikzcd}[row sep=large]
A \times \Ah \rar{t_a \times \id} \dar{p_1} & A \times \Ah \dar{p_1} \rar{p_2} & \Ah \\
A \rar{t_a} & A
\end{tikzcd}
\]
Since $(t_a \times \id)^{\ast} P_A = \pu_2 P_a \tensor P_A$, we have
\begin{align*}
	\derR \Phi_{P_A} \circ (t_a)_{\ast}
	&= \derR (p_2)_{\ast} \Bigl( P_A \tensor \pu_1 (t_a)_{\ast} \Bigr)
	= \derR (p_2)_{\ast} \Bigl( P_A \tensor (t_a \times \id )_{\ast} \pu_1 \Bigr) \\
	&= \derR (p_2)_{\ast} \Bigl( (t_a \times \id)^{\ast} P_A \tensor \pu_1 \Bigr)
	= \derR (p_2)_{\ast} \Bigl( \pu_2 P_a \tensor P_A \tensor \pu_1 \Bigr) \\
	&= P_a \tensor \derR (p_2)_{\ast} \bigl( P_A \tensor \pu_1 \bigr)
	= P_a \tensor \derR \Phi_{P_A},
\end{align*}
which is exactly what we needed.
\end{proof}

\newpar
In this section, we prove the assertions about GV-sheaves in the introduction.

\begin{proof}[Proof of \propositionref{prop:t-structure}] 
The functor $\FM_A$ is exact and contravariant, and so it suffices to show that
$\FM_A(\shF) \in \Dtcoh{\leq 0}(\OAh)$ for every coherent sheaf $\shF \in \Coh(\OA)$.
Observe that the $j$-th cohomology sheaf of the complex
\[
	\derR \Delta_A(\shF) = \derR \shHom \bigl( \shF, \omega_A \decal{\dim A} \bigr)
\]
is supported in codimension $\geq j + \dim A$; equivalently, the dimension of its support is
$\leq -j$. In the spectral sequence
\[
	E_2^{i,j} = R^i (p_2)_{\ast} \Bigl( P \tensor \pu_1 R^j \Delta_A(\shF) \Bigr)
		\Longrightarrow \shH^{i+j} \FM_A(\shF),
\]
we therefore have $E_2^{i,j} = 0$ whenever $i > -j$, for dimension reasons. We
conclude that $\shH^n \FM_A(\shF) = 0$ whenever $n > 0$, which is what we needed to
show.
\end{proof}

Suppose now that $\shF$ is a GV-sheaf; as in \definitionref{def:GV}, we let $\shFh =
\FM_A(\shF)$. Let us compute the restriction of the coherent sheaf $\shFh$ to a
closed point $\alpha \in \Ah(k)$. We can write this restriction in the form
\[
	\derL e^{\ast} (t_{-\alpha})_{\ast} \shFh,
\]
with $t_{-\alpha} \colon \Ah \to \Ah$ denoting translation by $-\alpha$. We can then
combine the identies in \propositionref{prop:homomorphism} and
\propositionref{prop:translation} to obtain an isomorphism
\begin{equation} \label{eq:restriction-derived}
	\derL e^{\ast} (t_{-\alpha})_{\ast} \FM_A(\shF)
		= \derL e^{\ast} \FM_A \bigl( P_{-\alpha} \tensor \shF \bigr)
		= \derR \Delta_k \derR \pl \bigl( P_{-\alpha} \tensor \shF \bigr),
\end{equation}
where $p \colon A \to \Spec k$ is the structure morphism. In particular
\begin{equation} \label{eq:restriction}
	\shFh \restr{\alpha} = \Hom_k \Bigl( H^0(A, \shF \tensor P_{-\alpha}), k \Bigr),
\end{equation}
and so $\alpha \in \Ah(k)$ belongs to the support of $\shFh$ if and only if $H^0(A,
\shF \tensor P_{-\alpha}) \neq 0$, proving \eqref{eq:support} from the introduction.

\newpar
This seems like a good place to prove \propositionref{prop:CSL} about the loci $S^j(A,
\shF)$.

\begin{proof}[Proof of \propositionref{prop:CSL}]
Let $\alpha \in \Ah(k)$ be a closed point. It will be convenient to use the symbol
$i_{\alpha} = t_{\alpha} \circ e \colon \Spec k \into \Ah$ for the resulting closed
embedding. After taking cohomology in \eqref{eq:restriction-derived}, we obtain
\[
	L^{-j} \iu_{\alpha} \shFh =
	\Hom_k \Bigl( H^j(A, \shF \tensor P_{-\alpha}), k \Bigr).
\]
This implies of course that
\[
	S^j(A, \shF)(k) = \menge{\alpha \in \Ah(k)}{L^{-j} \iu_{\alpha} \shFh \neq 0}.
\]
All the assertions in the proposition are now general facts about coherent sheaves on
nonsingular varieties. For the convenience of the reader, we recall the necessary
result (and its simple proof) in the following paragraph.
\end{proof}

\newpar
Let $X$ be a nonsingular algebraic variety over $k$; for a closed point $x \in X(k)$,
we denote by $i_x \colon \Spec k \into X$ the resulting closed embedding. For any
coherent sheaf $\shF$ on $X$, there are closed algebraic subsets $S^{-j}(\shF) \subseteq
X$, indexed by $j \in \NN$; on closed points, they are given by the formula
\[
	S^{-j}(\shF)(k) = \menge{x \in X}{L^{-j} \iu_x \shF \neq 0}.
\]
Here is the result that we used to prove \propositionref{prop:CSL}.

\begin{lemma}
Let $X$ be a nonsingular algebraic variety over $k$, and let $\shF$ be a coherent
sheaf on $X$. 
\begin{aenumerate}
\item One has $S^{-(j+1)}(\shF) \subseteq S^{-j}(\shF)$. 
\item Every irreducible component of $S^{-j}(\shF)$ has codimension $\geq j$. 
\end{aenumerate}
\end{lemma}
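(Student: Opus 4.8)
The plan is to reduce both statements to standard facts about finitely generated modules over regular local rings, by localizing $X$ at points. Throughout I use that $X$, being a variety over the algebraically closed field $k$, is a Jacobson scheme, so closed points are dense in every locally closed subset; in particular a closed subset of $X$ is determined by its set of closed points, and likewise an irreducible closed subset is determined together with its generic point.

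For part (a) it is enough to prove the inclusion on closed points. Fix a closed point $x \in X$, let $R = \shO_{X,x}$ (a regular local ring with residue field $k$), and set $M = \shF_x$. Choose a minimal free resolution $\dotsb \to F_1 \to F_0 \to M \to 0$ over $R$. The differentials of $F_\bullet \tensor_R k$ then vanish, so $L^{-m} \iu_x \shF = \Tor^R_m(M, k) \cong F_m \tensor_R k$, which is nonzero exactly when $F_m \neq 0$. Now observe that if $F_j = 0$ for some $j$, then the differential $F_{j+1} \to F_j$ is zero, so exactness of the resolution gives $F_{j+1} = \im(F_{j+2} \to F_{j+1}) \subseteq \mathfrak{m}_R F_{j+1}$, and Nakayama's lemma forces $F_{j+1} = 0$. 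Hence $\{\, m \geq 0 : F_m \neq 0 \,\}$ is an initial segment of $\NN$, and in particular $L^{-(j+1)} \iu_x \shF \neq 0$ implies $L^{-j} \iu_x \shF \neq 0$. This gives $S^{-(j+1)}(\shF) \subseteq S^{-j}(\shF)$.

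For part (b), let $Z$ be an irreducible component of $S^{-j}(\shF)$ with generic point $\eta$, put $R = \shO_{X,\eta}$ and $M = \shF_\eta$. Since $X$ is nonsingular, $R$ is a regular local ring, and by definition $\dim R = \codim_X Z$. The key step is to upgrade the hypothesis, which only concerns closed points, to the statement that $\Tor^R_j(M, k(\eta)) \neq 0$. For this, restrict to an affine open $U \ni \eta$ on which $\shF$ admits a finite resolution $E_\bullet$ by locally free sheaves; then $L^{-j} \iu_x \shF \cong H_j\bigl(E_\bullet \tensor k(x)\bigr)$ for every $x \in U$. The ranks of the transition maps of the complex $E_\bullet \tensor k(x)$ are lower semicontinuous in $x$, so $\dim_{k(x)} H_j\bigl(E_\bullet \tensor k(x)\bigr)$ is upper semicontinuous, and therefore the locus in $U$ where it is nonzero is closed. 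This locus contains all closed points of $Z \cap U$, which are dense in $Z \cap U$; hence it contains $\eta$, that is, $\Tor^R_j(M, k(\eta)) \neq 0$. Finally, a regular local ring of dimension $d$ has global dimension $d$ (Auslander--Buchsbaum--Serre), so $\Tor^R_j$ vanishes identically for $j > \dim R$; we conclude $j \leq \dim R = \codim_X Z$.

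The only genuinely delicate point is the semicontinuity step in part (b): the hypothesis gives information at the closed points of a component $Z$, whereas the global-dimension bound is only strong enough at the generic point $\eta$, where $\dim \shO_{X,\eta}$ equals $\codim_X Z$ rather than $\dim X$. Once one knows that the Tor-nonvanishing locus is closed — so that density of closed points propagates the vanishing hypothesis to $\eta$ — the rest is a routine application of Nakayama's lemma and the homological characterization of regular local rings.
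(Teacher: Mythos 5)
Your proof is correct and rests on the same ingredients as the paper's: minimal free resolutions over $\shO_{X,x}$ together with Nakayama's lemma for (a), and the Auslander--Buchsbaum--Serre bound on global dimension together with Jacobson density of closed points for (b). The only real variation is in (b), where the two arguments run in opposite directions: the paper spreads the finite free resolution of $\shF_Z$ from the generic point of $Z$ out to a Zariski-open neighborhood $U$ and then derives a contradiction at a closed point of $Z \cap U$, whereas you use upper semicontinuity of $x \mapsto \dim \Tor_j(\shF_x, k(x))$ to propagate the nonvanishing hypothesis from the closed points of $Z$ back to its generic point and apply the global-dimension bound there. These are mirror images of one another; both are standard and equally clean.
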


\begin{proof}
The basic fact is that every finitely generated module over a local ring $R$ has an
(essentially unique) minimal free resolution, and that when $R$ is regular, the
length of this resolution is at most $\dim R$. 

Let us prove the first assertion. It amounts to saying that $L^{-j} \iu_x \shF = 0$
implies $L^{-(j+1)} \iu_x \shF = 0$. The stalk $\shF_x$ is a finitely generated
module over the local ring $\shO_{X,x}$, and in its minimal free resolution, the rank
of the free $\shO_{X,x}$-module in degree $-j$ is equal to $\dim L^{-j} \iu_X \shF$.
If $L^{-j} \iu_X \shF = 0$, then the minimal free resolution has length at most $j$,
and this trivially implies that $L^{-(j+1)} \iu_x \shF = 0$.

Now let us prove the second assertion. Fix an irreducible component $Z$ of
$S^{-j}(\shF)$. After localizing at the generic point of $Z$, we obtain a finitely
generated module $\shF_Z$ over the local ring $\shO_{X, Z}$. By Serre's theorem,
$\shO_{X, Z}$ is a regular local ring of dimension $c = \codim(Z, X)$, and so the minimal
free resolution of $\shF_Z$ has lengt at most $c$. Since $\shF$ is coherent, there is
a Zariski-open subset $U \subseteq X$, containing the generic point of $Z$, such that
$\shF \restr{U}$ has a free resolution of length at most $c$. This implies that
$L^{-j} \iu_x \shF = 0$ for every $j > c$ and every $x \in U(k)$, and since $Z
\subseteq S^{-j}(\shF)$, the conclusion is that $c \geq j$.
\end{proof}

\newpar
We finish our discussion of the symmetric Fourier-Mukai transform and its
properties by proving \propositionref{prop:GV} from the introduction.

\begin{proof}[Proof of \propositionref{prop:GV}]
In $\Dbcoh(\OB)$, we have a distinguished triangle
\[
	\fl \shF \to \derR \fl \shF \to \tau_{\geq 1} \derR \fl \shF \to \dotsb,
\]
where $\tau_{\geq 1} \colon \Dbcoh(\OB) \to \Dtcoh{\geq 1}(\OB)$ is one of the
truncation functors for the standard t-structure. Applying $\FM_B$, which is exact
and contravariant, we obtain another distinguished triangle
\[
	\FM_B \bigl( \tau_{\geq 1} \derR \fl \shF \bigr)
		\to \FM_B(\derR \fl \shF) \to \FM_B(\fl \shF) \to \dotsb
\]	
in $\Dbcoh(\OBh)$. The complex on the left belongs to $\Dtcoh{\leq -1}(\OBh)$, due to
\propositionref{prop:t-structure}; also $\FM_B(\derR \fl \shF) = \derL \fhu
\FM_A(\shF) = \derL \fhu \shFh$ by \propositionref{prop:homomorphism}. Going from the
distinguished triangle to the long exact sequence in cohomology leads to
\[
	\fhu \shFh = \shH^0 \FM_B(\fl \shF) = \FM_B(\fl \shF),
\]
due to the fact that $\fl \shF$ is also a GV-sheaf.
\end{proof}	

\section{Applications} 

\newpar
Now let us turn to the applications to GV-sheaves and M-regular sheaves. We start by
proving two simple lemmas. Recall the notation 
\[
	K_1 \boxtimes K_2 = \pu_1 K_1 \tensor \pu_2 K_2 \in \Dbcoh(\shO_{A \times B})
\]
for objects $K_1 \in \Dbcoh(\OA)$ and $K_2 \in \Dbcoh(\OB)$.

\begin{lemma} \label{lem:box}
We have a natural isomorphism of bifunctors
\[
	\FM_{A \times B} \circ \, \boxtimes = \FM_A \boxtimes \FM_B
\]
from $\Dbcoh(\OA) \times \Dbcoh(\OB)$ to $\Dbcoh(\OAh) \times \Dbcoh(\OBh)$.
\end{lemma}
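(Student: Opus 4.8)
The plan is to unwind the definition $\FM_X = \derR\Phi_{P_X}\circ\derR\Delta_X$ on both sides and split the claim into two K\"unneth-type compatibilities, one for Grothendieck duality and one for the Mukai integral transform. First I would record the two geometric inputs. The dual of a product is $(A\times B)^{\vee} = \Ah\times\Bh$, and under the isomorphism
\[
	\sigma\colon (A\times B)\times(\Ah\times\Bh) \xrightarrow{\ \sim\ } (A\times\Ah)\times(B\times\Bh)
\]
that swaps the two inner factors, one has $\sigma^{\ast}(P_A\boxtimes P_B) = P_{A\times B}$. This follows from the universal property of the Poincar\'e correspondence recalled in the Notation section: the line bundle $\sigma^{\ast}(P_A\boxtimes P_B)$ carries the evident trivializations along $e\times\id$ and $\id\times e$, and its restriction to $(A\times B)\times\{(\alpha,\beta)\}$ is $P_{\alpha}\boxtimes P_{\beta}$, which is precisely the line bundle on $A\times B$ classified by the point $(\alpha,\beta)\in(\Ah\times\Bh)(k)$.

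Next I would establish the two compatibilities. For Grothendieck duality, since $\omega_{A\times B} = \omega_A\boxtimes\omega_B$ and $\dim(A\times B) = \dim A + \dim B$, there is a natural morphism $\derR\Delta_A(K_1)\boxtimes\derR\Delta_B(K_2)\to\derR\Delta_{A\times B}(K_1\boxtimes K_2)$; because $A$ and $B$ are nonsingular, every bounded coherent complex is perfect, and so this map can be checked to be an isomorphism locally, after a standard d\'evissage reducing $K_1$ and $K_2$ to locally free sheaves. For the integral transform, the identification $\sigma^{\ast}(P_A\boxtimes P_B) = P_{A\times B}$ together with the K\"unneth formula $\derR(f\times g)_{\ast}(M\boxtimes N) = \derR f_{\ast}M\boxtimes\derR g_{\ast}N$ for morphisms over $\Spec k$ (applied to $f = p_2\colon A\times\Ah\to\Ah$ and $g = p_2\colon B\times\Bh\to\Bh$) yields a natural isomorphism $\derR\Phi_{P_{A\times B}}(L_1\boxtimes L_2) = \derR\Phi_{P_A}(L_1)\boxtimes\derR\Phi_{P_B}(L_2)$ for $L_1\in\Dbcoh(\OA)$ and $L_2\in\Dbcoh(\OB)$; this is the same sort of base-change-and-pullback manipulation already used in the proofs of \propositionref{prop:homomorphism} and \propositionref{prop:translation}.

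Composing the two compatibilities gives
\[
	\FM_{A\times B}(K_1\boxtimes K_2) = \derR\Phi_{P_{A\times B}}\bigl(\derR\Delta_A K_1\boxtimes\derR\Delta_B K_2\bigr) = \FM_A(K_1)\boxtimes\FM_B(K_2),
\]
and naturality in each variable is inherited from the naturality of every step. The main obstacle is bookkeeping rather than depth: one must set up the external-product K\"unneth isomorphism for $\derR\Delta$ carefully (this is where nonsingularity of $A$ and $B$ enters, keeping everything inside perfect complexes), and --- more prone to normalization slips --- verify the identification of $P_{A\times B}$ with $P_A\boxtimes P_B$ under the reshuffling of factors; once these are pinned down, the rest is formal.
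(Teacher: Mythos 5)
Your proof is correct and follows essentially the same route as the paper's: the paper cites the identity $P_{A\times B} = p_{13}^{\ast}P_A \otimes p_{24}^{\ast}P_B$ (which is your $\sigma^{\ast}(P_A\boxtimes P_B) = P_{A\times B}$) and summarizes the rest as ``the usual base change arguments,'' which is exactly what you unwind into the two K\"unneth compatibilities for $\derR\Delta$ and $\derR\Phi_P$. Your write-up is more detailed but introduces no new idea, and the d\'evissage to locally free resolutions for the $\derR\Delta$ step is the standard way to make the paper's implicit argument rigorous.
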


\begin{proof}
	This follows from the isomorphism $P_{A \times B} = \pu_{13} P_A
	\tensor \pu_{24} P_B$ on the abelian variety $A \times B \times \Ah \times \Bh$,
	by the usual base change arguments.
\end{proof}

\newpar
The following lemma shows the effect of tensoring by an ample line bundle.

\begin{lemma} \label{lem:tensor-ample}
Let $L$ be an ample line bundle on $A$, and $K \in \Dbcoh(\OA)$. Then the
$k$-vector space $\Hom_k \bigl( H^i(A, L \tensor K), k \bigr)$
is isomorphic to a direct summand of
\[
	H^{-i} \bigl( \Ah, L \tensor \phi_L^{\ast} \FM_A(K) \bigr) \tensor H^0(A, L)^{\ast},
\]
where $\phi_L \colon A \to \Ah$ is the homomorphism in \eqref{eq:phiL}.
\end{lemma}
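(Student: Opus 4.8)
The plan is to compute the complex $L \tensor \phi_L^{\ast} \FM_A(K)$ on $A$ essentially by hand, using only flat base change, the projection formula and Grothendieck duality; the passage to a direct summand will be the last, cosmetic step. The one input I would prepare first is the see-saw identity
\[
	(\id_A \times \phi_L)^{\ast} P_A \;\cong\; m^{\ast} L \tensor q_1^{\ast} L^{-1} \tensor q_2^{\ast} L^{-1}
\]
of line bundles on $A \times A$, where $m \colon A \times A \to A$ is the group law and $q_1, q_2$ the projections: both sides restrict trivially to $\{0\} \times A$, and on each $A \times \{a\}$ both equal $t_a^{\ast} L \tensor L^{-1} = P_{\phi_L(a)}$ by the defining property of $\phi_L$ in \eqref{eq:phiL}, so the see-saw principle forces them to agree.

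Now set $K' = \derR \Delta_A(K)$, so that $\FM_A(K) = \derR \Phi_{P_A}(K') = \derR (p_2)_{\ast}\bigl( P_A \tensor p_1^{\ast} K' \bigr)$. Flat base change along the isogeny $\phi_L$ in the Cartesian square whose top arrow is $\id_A \times \phi_L$ and whose bottom arrow is $\phi_L$ turns $\phi_L^{\ast} \FM_A(K)$ into $\derR (q_2)_{\ast}\bigl( (\id_A \times \phi_L)^{\ast} P_A \tensor q_1^{\ast} K' \bigr)$; feeding in the see-saw identity and the projection formula gives
\[
	L \tensor \phi_L^{\ast} \FM_A(K) \;\cong\; \derR (q_2)_{\ast}\bigl( m^{\ast} L \tensor q_1^{\ast}(L^{-1} \tensor K') \bigr).
\]
I then take derived global sections (so $\derR (q_2)_{\ast}$ disappears) but reorganize by pushing forward along $q_1$ first. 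The automorphism $\sigma(x,y) = (x, x+y)$ of $A \times A$ satisfies $q_1 \circ \sigma = q_1$ and $q_2 \circ \sigma = m$, whence $m^{\ast} L = \sigma^{\ast} q_2^{\ast} L$ and $\derR (q_1)_{\ast}(m^{\ast} L) \cong \derR (q_1)_{\ast}(q_2^{\ast} L) = H^0(A,L) \tensor \OA$, the last equality being flat base change over $\Spec k$ together with $H^{>0}(A, L) = 0$ for the ample bundle $L$. Combined with the projection formula this yields
\[
	\derR \Gamma\bigl( A, L \tensor \phi_L^{\ast} \FM_A(K) \bigr) \;\cong\; H^0(A,L) \tensor \derR \Gamma\bigl( A, L^{-1} \tensor K' \bigr).
\]

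Since $L$ is a line bundle, $L^{-1} \tensor K' = L^{-1} \tensor \derR \Delta_A(K) = \derR \Delta_A(L \tensor K)$, and Grothendieck duality for $A \to \Spec k$ identifies $\derR \Gamma(A, \derR \Delta_A(L \tensor K))$ with $\derR \Delta_k\bigl( \derR \Gamma(A, L \tensor K) \bigr) = \derR \Gamma(A, L \tensor K)^{\vee}$. Taking $\cohH^{-i}$ of the last display gives the canonical isomorphism
\[
	H^{-i}\bigl( A, L \tensor \phi_L^{\ast} \FM_A(K) \bigr) \;\cong\; H^0(A,L) \tensor \Hom_k\bigl( H^i(A, L \tensor K), k \bigr),
\]
which is in fact a little stronger than the assertion. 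Tensoring with $H^0(A,L)^{\ast}$ and noting that the line $k \cdot \id$ is a vector-space direct summand of $H^0(A,L) \tensor H^0(A,L)^{\ast} = \End_k\bigl( H^0(A,L) \bigr)$ exhibits $\Hom_k(H^i(A, L \tensor K), k)$ as a direct summand of the group in the statement.

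The only delicate point is the bookkeeping in the middle step — getting the see-saw normalization right, and keeping straight $L$ versus $L^{-1}$ and the shift hidden inside $\derR \Delta_A$; after that it is all formal manipulation of base change and projection formulas. Alternatively, and more in the spirit of the earlier sections, one may write $L \tensor K = \derL \Delta^{\ast}(L \boxtimes K)$ for the diagonal $\Delta \colon A \to A \times A$, use \propositionref{prop:homomorphism} and \lemmaref{lem:box} to identify $\FM_A(L \tensor K)$ with $\derR (m_{\Ah})_{\ast}\bigl( \FM_A(L) \boxtimes \FM_A(K) \bigr)$, restrict at the origin of $\Ah$ via \eqref{eq:restriction-derived}, and pull back along $\phi_L$ using \propositionref{prop:ample}; along that route the ampleness of $L$ enters through the vector bundle $\Lh = \FM_A(L)$, and the reduction to a direct summand is precisely the pullback along the isogeny $\phi_L$.
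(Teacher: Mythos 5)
Your proof is correct and takes a genuinely different route from the paper's. The paper writes $L \tensor K = \derL\Delta^{\ast}(L \boxtimes K)$, invokes \propositionref{prop:homomorphism} (applied to the diagonal, dual to $m$, and to the structure morphism) together with \lemmaref{lem:box} to express $\FM_A(L \tensor K)$ as $\derR m_{\ast}\bigl(\Lh \boxtimes \FM_A(K)\bigr)$, restricts at the origin by base change along $m$, and only then pulls back along the isogeny $\phi_L$ — this last step is precisely where the ``direct summand'' is lost, and \propositionref{prop:ample} is where the factor $H^0(A,L)^{\ast}$ enters. Your main argument unwinds $\phi_L^{\ast}\derR\Phi_{P_A}$ directly: the see-saw identity for $(\id \times \phi_L)^{\ast}P_A$ does the work of both \propositionref{prop:ample} and \lemmaref{lem:box}, the shear $\sigma(x,y)=(x,x+y)$ together with base change along $q_1$ replaces the detour through $\Lh$, and the only ampleness used is $H^{>0}(A,L)=0$. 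The payoff is that you obtain the honest isomorphism
\[
	H^{-i}\bigl(A, L \tensor \phi_L^{\ast}\FM_A(K)\bigr) \cong H^0(A,L) \tensor \Hom_k\bigl(H^i(A, L\tensor K), k\bigr),
\]
strictly stronger than the stated direct-summand claim, which you recover only in the final cosmetic step by splitting off $k\cdot\id$ inside $\End_k\bigl(H^0(A,L)\bigr)$ as $k$-vector spaces. You also sidestep the splitting $\OAh \to (\phi_L)_{\ast}\OA$ that the paper's isogeny trick relies on. Your closing paragraph correctly identifies the paper's approach as the alternative route. One small observation: the complex $L \tensor \phi_L^{\ast}\FM_A(K)$ lives on $A$, so the cohomology group in the lemma is indeed over $A$; you read this correctly, and the $\Ah$ appearing in the printed statement is a slip.
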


\begin{proof}
If we apply \propositionref{prop:homomorphism} to the morphism $p \colon A \to \Spec
k$, we obtain
\[
	\derR \Delta_{\Spec k} \derR \pl(L \tensor K) =
	\derL e^{\ast} \FM_A(L \tensor K).
\]
Now $L \tensor K = \derL \Delta^{\ast}(L \boxtimes K)$, and the diagonal
homomorphism $\Delta \colon A \to A \times A$ is dual to the multiplication morphism
$m \colon \Ah \times \Ah \to \Ah$. Another application of
\propositionref{prop:homomorphism}, followed by \lemmaref{lem:box}, therefore gives
\[
	\FM_A(L \tensor K) = \FM_A \derL \Delta^{\ast}(L \boxtimes K)
	= \derR m_{\ast} \bigl( \Lh \boxtimes \FM_A(K) \bigr),
\]
due to the fact that $L$ is a GV-sheaf. In the rest of the proof, we use the
following commutative diagram:
\[
\begin{tikzcd}
	A \rar{\phi_L} \drar[bend right=20]{p} & \Ah \dar{p} \rar{(i, \id)} & 
	\Ah \times \Ah \dar{m} \\
	 & \Spec k \rar{e} & \Ah
\end{tikzcd}
\]
Base change for the smooth morphism $m$ yields an isomorphism
\[
	\derR \Delta_k \derR \pl(L \tensor K) =
	\derR \pl \bigl( \iu \Lh \tensor \FM_A(K) \bigr).
\]
Since $\phi_L$ is an isogeny, $\derR \pl \bigl( \iu \Lh \tensor \FM_A(K) \bigr)$ is
isomorphic to a direct summand of
\[
	\derR (p \circ \phi_L)_{\ast} \phi_L^{\ast} \bigl( \iu \Lh \tensor \FM_A(K) \bigr).
\]
Using \propositionref{prop:ample}, we can rewrite this in the form
\[
	\derR \pl \bigl( \phi_L^{\ast} \iu \Lh \tensor \phi_L^{\ast} \FM_A(K) \bigr)
	= \derR \pl \bigl( L \tensor \phi_L^{\ast} \FM_A(K) \bigr) \tensor H^0(A,
	L)^{\ast}.
\]
We now get the desired result by taking cohomology.
\end{proof}

\newpar 
We can now prove Hacon's criterion for GV-sheaves in terms of vanishing.

\begin{proof}[Proof of Theorem~\ref*{thm:Hacon}]
Suppose that $\shF$ is a GV-sheaf on $A$. The pullback of $\shF$ by an isogeny
$\varphi \colon A' \to A$ is again a GV-sheaf on $A'$; this follows from the
identities in \propositionref{prop:homomorphism} because
\[
	\FM_{A'}(\varphi^{\ast} \shF) = \hat{\varphi}_{\ast} \FM_A(\shF).
\]
Here we used the fact that isogenies are flat and finite. We can therefore
assume without loss of generality that $\varphi = \id_A$. Now let $L$ be any ample
line bundle on $A$. To prove the first half of the theorem, we need to show that
\[
	H^i(A, L \tensor \shF) = 0 \quad \text{for $i > 0$.}
\]
\lemmaref{lem:tensor-ample} reduces the problem to showing that 
\[
	H^{-i} \bigl( \Ah, L \tensor \phi_L^{\ast} \FM_A(\shF) \bigr) = 0
	\quad \text{for $i > 0$.}
\]
But this is obvious because $\FM_A(\shF)$ is a sheaf (and $\phi_L$ is flat).

Next, we prove the converse. Suppose that $\shF \in \Coh(\OA)$ has the property
stated in the theorem. We need to explain why $\shH^i \FM_A(\shF) = 0$ for every $i
\neq 0$; in fact, by \propositionref{prop:t-structure}, it would be enough to
consider only $i < 0$. After choosing a sufficiently ample line bundle $L'$ on $\Ah$, the
problem reduces to showing that
\[
	H^i \bigl( \Ah, L' \tensor \FM_A(\shF) \bigr) = 0
	\quad \text{for $i \neq 0$.}
\]
Another application of \lemmaref{lem:tensor-ample}, but with the roles of $A$ and
$\Ah$ interchanged, shows that it is enough to prove the vanishing
\[
	H^{-i} \bigl( \Ah, L' \tensor \phi_{L'}^{\ast} \shF \bigr) = 0
	\quad \text{for $i \neq 0$,}
\]
where $\phi_{L'} \colon \Ah \to A$ is the isogeny determined by $L'$. This is obvious
for $i > 0$, and for $i < 0$, it is exactly the condition in the statement of the
theorem. 
\end{proof}

\newpar
In the remainder of the paper, we relate M-regularity to (continuous) global generation. The
underlying idea is extremely simple. Suppose that $\shF \in \Coh(\OA)$ is a GV-sheaf,
and set $\shFh = \FM_A(\shF) \in \Coh(\OAh)$. Let $S \subseteq \Ah(k)$ be any set of
closed points. A small calculation shows that the evaluation morphism
\begin{equation} \label{eq:S-evaluation}
	\bigoplus_{\alpha \in S} 
	H^0(A, \shF \tensor P_{-\alpha}) \tensor P_{\alpha} \to \shF
\end{equation}
corresponds, under the (contravariant!) functor $\FM_A$, to the morphism
\begin{equation} \label{eq:S-dual-evaluation}
	\shFh \to \prod_{\alpha \in S} \shFh \tensor k(\alpha);
\end{equation}
here $k(\alpha) = (t_{\alpha} \circ e)_{\ast} \shO_{\Spec k}$. Now suppose that
$\shF$ is M-regular, and that the set $S \subseteq \Ah(k)$ is dense in the Zariski topology.
Since $\shFh$ is torsion-free, the morphism in \eqref{eq:S-dual-evaluation} must be
injective, and so we should expect the evaluation morphism in \eqref{eq:S-evaluation}
to be surjective. The only problem is that infinite sums are not coherent (and
infinite products are not even quasi-coherent), which means that our results about
$\FM_A$ do not apply. This can be dealt with by making a pointwise argument.

\newpar
We now turn the above idea into a rigorous proof. 

\begin{proposition} \label{prop:evaluation}
Suppose that $\shF \in \Coh(\OA)$ is M-regular. Let $S \subseteq \Ah(k)$ be any
subset that is dense in the Zariski topology. Then there is a finite subset $F
\subseteq S$ such that the evaluation morphism
\[
	\bigoplus_{\alpha \in F}
	H^0(A, \shF \tensor P_{-\alpha} \bigr) \tensor P_{\alpha} \to \shF
\]
is surjective.
\end{proposition}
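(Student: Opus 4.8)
The plan is to translate the problem through $\FM_A$, where it becomes a statement about a morphism of coherent sheaves on $\Ah$, exploit torsion-freeness there, and then descend back using the exactness and contravariance of $\FM_A$ together with the fact that $\shFh = \FM_A(\shF)$ is an honest coherent sheaf. First I would set $\shFh = \FM_A(\shF) \in \Coh(\OAh)$, which is torsion-free by M-regularity. The key observation, already sketched in the paragraph preceding the statement, is that for a \emph{finite} subset $F \subseteq S$ the evaluation morphism
\[
	\bigoplus_{\alpha \in F} H^0(A, \shF \tensor P_{-\alpha}) \tensor P_{\alpha} \to \shF
\]
has, under $\FM_A$, the shape of a morphism
\[
	\shFh \to \bigoplus_{\alpha \in F} \shFh \tensor k(\alpha);
\]
indeed, by \eqref{eq:example-2} one has $\FM_A(P_{\alpha}) = k(\alpha)$, and by \eqref{eq:restriction} the vector space $H^0(A, \shF \tensor P_{-\alpha})$ is dual to the fiber $\shFh \restr{\alpha}$, so the source of the dual morphism is $\bigoplus_{\alpha \in F} \shFh \tensor k(\alpha)$ after accounting for the duality. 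Because $\FM_A$ is an equivalence (\theoremref{thm:equivalence}), the original evaluation morphism is surjective if and only if the morphism in the display above is injective, i.e.\ if and only if its kernel vanishes.

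So the task reduces to: \emph{find a finite $F \subseteq S$ such that the natural map $\shFh \to \bigoplus_{\alpha \in F} \shFh \restr{\alpha}$ is injective.} Here I would argue on $\Ah$ directly. Let $\shG \subseteq \shFh$ be the kernel of $\shFh \to \prod_{\alpha \in S} \shFh \restr{\alpha}$; since $S$ is Zariski-dense and $\shFh$ is torsion-free, $\shG = 0$. (A section of $\shFh$ vanishing at a dense set of points is supported on a proper closed subset, hence is a torsion section, hence zero.) Now the kernels $\shG_F$ of the finite maps $\shFh \to \bigoplus_{\alpha \in F} \shFh \restr{\alpha}$, indexed by finite subsets $F \subseteq S$, form a decreasing filtered family of coherent subsheaves of $\shFh$ whose intersection is $\shG = 0$. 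By Noetherianity of $\Ah$ (or of the coherent sheaf $\shFh$), this descending chain stabilizes, so $\shG_F = 0$ for some finite $F$. That $F$ is the one we want.

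The step I expect to require the most care is the precise bookkeeping connecting \eqref{eq:S-evaluation} and \eqref{eq:S-dual-evaluation} — i.e.\ verifying that the map $\FM_A$ sends the concrete evaluation morphism to exactly the concrete ``restriction to points'' morphism, with the right identifications of $H^0(A, \shF \tensor P_{-\alpha})$ with the dual of $\shFh \restr{\alpha}$ coming from \eqref{eq:restriction}, and checking that surjectivity of one is genuinely equivalent to injectivity of the other (this uses \theoremref{thm:equivalence} and \propositionref{prop:t-structure} to know both complexes are sheaves in the relevant degree, so that ``surjective'' and ``injective'' make sense as statements about the sheaves rather than the complexes). Once that dictionary is in place, the Noetherian descending-chain argument on $\Ah$ is routine, and the hypothesis that $\shF$ is M-regular enters precisely to guarantee torsion-freeness of $\shFh$, which is what makes $\shG$ — and hence the eventual $\shG_F$ — vanish.
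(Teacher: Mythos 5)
There are two genuine gaps in this proposal, and both are worth spelling out because they point to exactly why the paper works pointwise rather than globally.

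\textbf{Gap 1: the reformulation via $\FM_A$ is not valid.} You assert that, because $\FM_A$ is an equivalence, the evaluation morphism
\[
	\bigoplus_{\alpha \in F} H^0(A, \shF \tensor P_{-\alpha}) \tensor P_{\alpha} \to \shF
\]
is surjective \emph{if and only if} the map $\shFh \to \bigoplus_{\alpha \in F} \shFh \tensor k(\alpha)$ is injective. But $\FM_A$ is an exact equivalence of triangulated categories, and exactness in the triangulated sense does not turn surjections of sheaves into injections: those are not derived-categorical notions. Concretely, a surjection $\shA \twoheadrightarrow \shB$ with kernel $\shK$ gives (when $\shA, \shB$ are GV and using \propositionref{prop:t-structure}) an exact sequence
\[
	0 \to \shH^{-1}\FM_A(\shK) \to \FM_A(\shB) \to \FM_A(\shA) \to \shH^0\FM_A(\shK) \to 0,
\]
so injectivity of $\FM_A(\shB) \to \FM_A(\shA)$ is an extra condition on $\shK$, not automatic. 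Worse, your reformulated target is impossible to reach: the sheaf $\bigoplus_{\alpha \in F}\shFh \tensor k(\alpha)$ is a skyscraper supported on the finite set $F$, so the kernel of $\shFh \to \bigoplus_{\alpha \in F}\shFh\tensor k(\alpha)$ contains every local section of $\shFh$ over $\Ah \setminus F$ and is therefore nonzero whenever $\shFh \neq 0$. You have reduced a true statement to a false one.

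\textbf{Gap 2: the Noetherian argument is backwards.} Coherent sheaves on a noetherian scheme satisfy the \emph{ascending} chain condition on subsheaves, not the descending one; $(t^n)\subset \shO_{\mathbb{A}^1}$ is a strictly decreasing chain. So ``$\bigcap_F \shG_F = 0$ and the $\shG_F$ are decreasing, hence some $\shG_F = 0$'' is not a valid inference. In fact, as noted above, none of the $\shG_F$ are zero.

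The paper sidesteps both problems by going \emph{pointwise}: fix a closed point $a \in A(k)$, reduce surjectivity near $a$ (via Nakayama) to surjectivity of the fiber map $H^0(A, \shF \tensor P_{-\alpha}) \to (\shF \tensor P_{-\alpha})\restr{a}$, identify this map of finite-dimensional $k$-vector spaces as dual to $H^0(\Ah, \shFh \tensor P_{-a}) \to (\shFh \tensor P_{-a})\restr{\alpha}$, and then use the \emph{finite-dimensionality} of $H^0(\Ah, \shFh \tensor P_{-a})$ --- this is what replaces your Noetherian step and legitimately produces a finite $S(a) \subseteq S$. Torsion-freeness and density of $S$ enter precisely in showing that the map to the full product $\prod_{\alpha \in S}(\shFh\tensor P_{-a})\restr{\alpha}$ is injective. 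Finally, quasi-compactness of $A$ lets one cover $A$ by finitely many of the resulting open sets of surjectivity and take $F$ to be the union of the corresponding $S(a)$. Your dictionary-between-evaluation-and-restriction intuition is sound, but it must be run at the level of fibers over each $a$, where ``surjective'' and ``injective'' really are dual, not at the level of sheaves on $\Ah$.
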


\begin{proof}
Let $a \in A(k)$ be any closed point. Consider the evaluation morphism
\[
	\ev_{\alpha} \colon H^0(A, \shF \tensor P_{-\alpha}) \tensor P_{\alpha} \to \shF.
\]
To show that $\ev_{\alpha}$ is surjective in a neighborhood of a closed point $a \in
A(k)$, it is enough (by Nakayama's lemma) to show that the induced morphism
\begin{equation} \label{eq:evaluation-A}
	H^0(A, \shF \tensor P_{-\alpha}) \to (\shF \tensor P_{-\alpha}) \restr{a}
\end{equation}
is surjective. By \propositionref{prop:translation}, we have
\[
	\FM_A(\shF \tensor P_{-\alpha}) = (t_{-\alpha})_{\ast} \FM_A(\shF)
		= (t_{-\alpha})_{\ast} \shFh.
\]
Combining this with the formula in \eqref{eq:restriction}, we get
\begin{align*}
	(\shF \tensor P_{-\alpha}) \restr{a} &=
	\Hom_k \bigl( H^0(\Ah, (t_{-\alpha})_{\ast} \shFh \tensor P_{-a}), k \bigr) \\
	&= \Hom_k \bigl( H^0(\Ah, \shFh \tensor P_{-a}), k \bigr),
\end{align*}
using the translation invariance of $P_{-a}$. It follows that the morphism 
in \eqref{eq:evaluation-A} is dual to the morphism
\begin{equation} \label{eq:evaluation-Ah}
	H^0(\Ah, \shFh \tensor P_{-a}) \to (\shFh \tensor P_{-a}) \restr{\alpha}.
\end{equation}
The conclusion is that $\ev_{\alpha}$ is surjective in a neighborhood of a closed
point $a \in A(k)$ if and only if the morphism in \eqref{eq:evaluation-Ah} is
injective.

Now $H^0(\Ah, \shFh \tensor P_{-a})$ is a finite-dimensional $k$-vector space. Since
$\shFh$ is a torsion-free coherent sheaf, and $S \subseteq \Ah(k)$ is dense in the
Zariski topology, we can therefore find a \emph{finite} subset $S(a)
\subseteq S$ such that the morphism
\[
	H^0(\Ah, \shFh \tensor P_{-a}) \to 
	\prod_{\alpha \in S(a)} (\shFh \tensor P_{-a}) \restr{\alpha}
\]
is injective. It follows that the dual morphism
\[
	\bigoplus_{\alpha \in S(a)}
		H^0(A, \shF \tensor P_{-\alpha}) \to (\shF \tensor P_{-\alpha}) \restr{a}
\]
is surjective, and hence that the evaluation morphism
\[
	\bigoplus_{\alpha \in S(a)}
	H^0(A, \shF \tensor P_{-\alpha}) \tensor P_{\alpha} \to \shF
\]
is surjective on a Zariski-open subset containing the given closed point $a \in
A(k)$. Finitely many of these open subsets cover $A$;
consequently, there is a finite subset $F \subseteq S$ with the property that
\[
	\bigoplus_{\alpha \in F}
	H^0(A, \shF \tensor P_{-\alpha}) \tensor P_{\alpha} \to \shF
\]
is surjective. This completes the proof.
\end{proof}

\newpar
We can now show that M-regularity implies (continuous) global generation.

\begin{proof}[Proof of \theoremref{thm:M-regular}]
The first assertion follows immediately from \propositionref{prop:evaluation}. For
the second one, let $S \subseteq \Ah(k)$ be the set of all torsion points; this is
certainly dense in the Zariski topology. By \propositionref{prop:evaluation}, there
is a finite subset $F \subseteq S$ such that
\[
	\bigoplus_{\alpha \in F}
	H^0(A, \shF \tensor P_{-\alpha}) \tensor P_{\alpha} \to \shF
\]
is surjective. Let $\varphi \colon A' \to A$ be an isogeny with the property that
$\hat{\varphi}(\alpha) = 0$ for every $\alpha \in F$; for example, multiplication by
the greatest common divisor of the orders of the points in $F$ will do. The induced
morphism
\[
	\bigoplus_{\alpha \in F}
	H^0(A, \shF \tensor P_{-\alpha}) \tensor \shO_{A'} \to \varphi^{\ast} \shF
\]
is then still surjective, and so $\varphi^{\ast} \shF$ is globally generated.
\end{proof}

\newpar
In fact, the argument above leads to the following stronger result.

\begin{theorem} \label{thm:torsion-free}
Suppose that $\shF \in \Coh(\OA)$ is torsion-free. Let $S \subseteq A(k)$ be any
subset that is dense in the Zariski topology. Then there is a finite subset $F
\subseteq S$ such that the evaluation morphism
\[
	\bigoplus_{a \in F}
	H^0 \bigl( \Ah, \shH^0 \FM_A(\shF) \tensor P_{-a} \bigr) \tensor P_a
	\to \shH^0 \FM_A(\shF)
\]
is surjective.
\end{theorem}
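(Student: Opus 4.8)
The plan is to reduce \theoremref{thm:torsion-free} to \propositionref{prop:evaluation} by interchanging the roles of $A$ and $\Ah$. The key observation is that $\shH^0 \FM_A(\shF)$ is a coherent sheaf on $\Ah$, and the statement we want is precisely the conclusion of \propositionref{prop:evaluation} applied to this sheaf on $\Ah$ (with $A$ in place of $\Ah$). So it suffices to check that $\shH^0 \FM_A(\shF)$ is M-regular, i.e., that it is a GV-sheaf on $\Ah$ whose Fourier-Mukai transform is torsion-free. The natural candidate for that transform is $\shF$ itself, or rather its torsion-free quotient.

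First I would recall from \propositionref{prop:t-structure} that $\FM_A(\shF)$ is concentrated in non-positive degrees, so there is a distinguished triangle
\[
	\tau_{\leq -1} \FM_A(\shF) \to \FM_A(\shF) \to \shH^0 \FM_A(\shF) \to \dotsb
\]
in $\Dbcoh(\OAh)$. Applying the exact contravariant functor $\FM_{\Ah}$ and using $\FM_{\Ah} \circ \FM_A = \id$ from \theoremref{thm:equivalence}, we get a triangle whose middle term is $\shF$; the third term $\FM_{\Ah}(\tau_{\leq -1} \FM_A(\shF))$ lies in $\Dtcoh{\geq 1}(\OAh)$ again by \propositionref{prop:t-structure}. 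Passing to the long exact cohomology sequence shows that $\shH^0 \FM_{\Ah}(\shH^0 \FM_A(\shF))$ is a subsheaf of $\shF$, hence torsion-free since $\shF$ is, and that $\FM_{\Ah}(\shH^0 \FM_A(\shF))$ has no cohomology in positive degrees. Combined with \propositionref{prop:t-structure} (which gives no cohomology in negative degrees), this shows $\shH^0 \FM_A(\shF)$ is a GV-sheaf on $\Ah$ with torsion-free transform — that is, it is M-regular.

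Once that is established, the theorem is immediate: apply \propositionref{prop:evaluation} to the M-regular sheaf $\shH^0 \FM_A(\shF)$ on $\Ah$, with the dense subset $S \subseteq A(k) = \widehat{\Ah}(k)$, to produce the finite subset $F \subseteq S$ making the evaluation morphism surjective. I expect the main obstacle to be the careful bookkeeping in the triangle argument: one must verify that the connecting maps in the long exact sequence identify $\shH^0 \FM_{\Ah}(\shH^0 \FM_A(\shF))$ with an honest subsheaf of $\shF$ (not just a subquotient), which is where torsion-freeness of $\shF$ gets used, and one must confirm there is no hidden contribution in degree $0$ from the truncated piece. Everything else is a direct citation of the results already proved.
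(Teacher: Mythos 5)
The idea of reducing the theorem to Proposition~\ref{prop:evaluation} applied to $\shG := \shH^0 \FM_A(\shF)$ on $\Ah$ is appealing, but the argument you give for M-regularity of $\shG$ has a genuine gap, and the gap is at the heart of the matter. You apply $\FM_{\Ah}$ to the truncation triangle
\[
	\tau_{\leq -1} \FM_A(\shF) \to \FM_A(\shF) \to \shG \to \dotsb
\]
and assert that $\FM_{\Ah}\bigl(\tau_{\leq -1}\FM_A(\shF)\bigr)$ lies in $\Dtcoh{\geq 1}(\OA)$ ``again by Proposition~\ref{prop:t-structure}.'' But Proposition~\ref{prop:t-structure} only proves a one-sided bound: $K \in \Dtcoh{\geq n}$ implies $\FM(K) \in \Dtcoh{\leq -n}$. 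The implication you need, namely $K \in \Dtcoh{\leq -1} \Rightarrow \FM(K) \in \Dtcoh{\geq 1}$, is the converse, and it is false. For a concrete counterexample, let $A$ be an elliptic curve and $L$ a line bundle of degree $-1$; then $\FM_A(L)$ is a line bundle placed in cohomological degree $-1$, so $K := \FM_A(L) \in \Dtcoh{\leq -1}(\OAh)$, yet $\FM_{\Ah}(K) = L$ sits in degree $0$, not in $\Dtcoh{\geq 1}$. Once this step fails, the long exact sequence no longer shows that $\FM_{\Ah}(\shG)$ is a sheaf (so you have not established the GV property), nor that $\shH^0\FM_{\Ah}(\shG)$ injects into $\shF$ (so torsion-freeness of $\shF$ cannot be transferred this way). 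Running the long exact sequence honestly, one only learns that $\shH^0 \FM_{\Ah}(\shG)$ maps to $\shF$ with some possibly nonzero kernel coming from $\shH^{-1}$ of the third term, and that the third term actually lives in $\Dtcoh{\leq 0}$, not $\Dtcoh{\geq 1}$.

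This matters because the desired M-regularity of $\shG$ is precisely what is not available in general, and the paper's proof is designed to avoid having to assert it. The paper argues pointwise: for each pair of closed points $(a,\alpha)$ it identifies the evaluation map $H^0(A, \shF \tensor P_{-\alpha}) \to (\shF \tensor P_{-\alpha})\restr{a}$ as dual to the evaluation map $H^0\bigl(\Ah, \FM_A(\shF) \tensor P_{-a}\bigr) \tensor P_a\restr{\alpha} \to \shH^0\FM_A(\shF)\restr{\alpha}$ involving the \emph{hypercohomology} of the full complex $\FM_A(\shF)$, and then uses a commutative diagram to pass to the evaluation map for the sheaf $\shG = \shH^0\FM_A(\shF)$. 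The torsion-freeness of $\shF$ is used directly, on the $A$-side, to get injectivity of the restriction map $H^0(A, \shF \tensor P_{-\alpha}) \to \prod_{a} (\shF \tensor P_{-\alpha})\restr{a}$ over a dense set of $a$'s; no global coherence statement about $\FM_{\Ah}(\shG)$ is required. If you want to keep your cleaner-looking reduction, you would first need a correct proof that $\shG$ is M-regular, and that is not supplied by Proposition~\ref{prop:t-structure}; as things stand, your argument does not close.
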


\begin{proof}
Let $a \in A(k)$ and $\alpha \in \Ah(k)$ be any pair of closed points. We have
\begin{align*}
	\derR \Delta_k \derL e^{\ast} (t_{-a})_{\ast}(\shF \tensor P_{-\alpha}) 
	&= \derR \pl \bigl( (t_{-\alpha})_{\ast} \FM_A(\shF) \tensor P_{-a} \bigr) \\
	&= \derR \pl \bigl( \FM_A(\shF) \tensor t_{-\alpha}^{\ast} P_{-a} \bigr)
\end{align*}
using \propositionref{prop:homomorphism}, \propositionref{prop:translation}, and the
projection formula (for $t_{-\alpha}$). Our fixed trivializations for $(e
\times \id)^{\ast} P_A$ and $(\id \times e)^{\ast} P_A$ determine an isomorphism
\[
	(\id \times t_{-\alpha})^{\ast} P_A = \pu_1 P_{-\alpha} \tensor P_A,
\]
which we can use to rewrite the identity above in the form
\[
	\derR \Delta_k \derL e^{\ast} (t_{-a})_{\ast}(\shF \tensor P_{-\alpha}) 
	= \derR \pl \bigl( \FM_A(\shF) \tensor P_{-a} \bigr) \tensor P_{(-a,-\alpha)}.
\]
Here $P_{(-a,-\alpha)}$ is the fiber of the Poincar\'e bundle at $(-a, -\alpha) \in
(A \times \Ah)(k)$. Taking cohomology in degree zero, we see that
\[
	(\shF \tensor P_{-\alpha}) \restr{a} \quad \text{is dual to} \quad
	H^0 \bigl( \Ah, \FM_A(\shF) \tensor P_{-a} \bigr) \tensor P_a \restr{\alpha},
\]
using the identity in \eqref{eq:P-a-alpha} to replace $P_{(-a,-\alpha)}$. Similarly,
we have
\[
	\derR \Delta_k \derR \pl \bigl( P_{-\alpha} \tensor \shF \bigr)
	= \derL e^{\ast} (t_{-\alpha})_{\ast} \FM_A(\shF),
\]
and since $\FM_A(\shF) \in \Dtcoh{\leq 0}(\OAh)$ by
\propositionref{prop:t-structure}, it follows that
\[
	H^0(A, \shF \tensor P_{-\alpha}) \quad \text{is dual to} \quad
	L^0 e^{\ast} (t_{-\alpha})_{\ast} \FM_A(\shF) = \shH^0 \FM_A(\shF) \restr{\alpha}.
\]
The conclusion is that that the evaluation morphism
\[
	H^0(A, \shF \tensor P_{-\alpha}) \to (\shF \tensor P_{-\alpha}) \restr{a}
\]
on $A$ is dual to the evaluation morphism
\[
	H^0 \bigl( \Ah, \FM_A(\shF) \tensor P_{-a} \bigr) \tensor P_a \restr{\alpha}
	\to L^0 e^{\ast} (t_{-\alpha})_{\ast} \FM_A(\shF).
\]
on $\Ah$. Note that the second morphism fits into a commutative diagram
\[
\begin{tikzcd}
	H^0 \bigl( \Ah, \FM_A(\shF) \tensor P_{-a} \bigr) \tensor P_a \restr{\alpha}
	\rar \dar & L^0 e^{\ast} (t_{-\alpha})_{\ast} \FM_A(\shF) \dar[equal] \\
	H^0 \bigl( \Ah, \shH^0 \FM_A(\shF) \tensor P_{-a} \bigr) \tensor P_a \restr{\alpha}
	\rar  & \shH^0 \FM_A(\shF) \restr{\alpha}.
\end{tikzcd}
\]
Now we can argue as before to prove the assertion. 

Fix a closed point $\alpha \in \Ah(k)$. Since $\shF$ is torsion-free, and $S
\subseteq A(k)$ is dense in the Zariski topology, there is a finite subset $S(\alpha)
\subseteq S$ such that the morphism
\[
	H^0(A, \shF \tensor P_{-\alpha}) \to \prod_{a \in S(\alpha)} (\shF \tensor
	P_{-\alpha}) \restr{a}
\]
is injective. After dualizing, it follows that the morphism
\[
	\bigoplus_{a \in S(\alpha)} H^0 \bigl( \Ah, \shH^0 \FM_A \tensor P_{-a} \bigr)
	\tensor P_a \restr{\alpha} \to \shH^0 \FM_A(\shF) \restr{\alpha}
\]
is surjective. By Nakayama's lemma, this means that
\[
	\bigoplus_{a \in S(\alpha)} H^0 \bigl( \Ah, \shH^0 \FM_A \tensor P_{-a} \bigr)
	\tensor P_a \to \shH^0 \FM_A(\shF)
\]
is surjective on a Zariski-open subset containing the closed point $\alpha \in
\Ah(k)$. Finitely many of these open subsets cover $\Ah$;
consequently, there is a finite subset $F \subseteq S$ with the property that
\[
	\bigoplus_{a \in F} H^0 \bigl( \Ah, \shH^0 \FM_A \tensor P_{-a} \bigr)
	\tensor P_a \to \shH^0 \FM_A(\shF)
\]
is surjective. This finishes the proof.
\end{proof}

\providecommand{\bysame}{\leavevmode\hbox to3em{\hrulefill}\thinspace}
\providecommand{\MR}{\relax\ifhmode\unskip\space\fi MR }
\providecommand{\MRhref}[2]{%
  \href{http://www.ams.org/mathscinet-getitem?mr=#1}{#2}
}
\providecommand{\href}[2]{#2}


\end{document}